\numberwithin{equation}{section}
\def\m{\medskip}
\newtheorem{thm}{Theorem}[section]
\newtheorem{proposition}[thm]{Proposition}
\newtheorem{question}[thm]{Question}
\newtheorem{prop}[thm]{Proposition}
\theoremstyle{definition}
\newtheorem{definition}[thm]{Definition}
\newtheorem{df}[thm]{Definition}
\newtheorem{example}[thm]{Example}
\newcommand\defref{Definition~\ref}
\newcommand{\RR}{\mathbb{R}}
\newcommand{\ZZ}{\mathbb{Z}}
\newcommand{\ga}{\alpha}
\def\TC{\operatorname{TC}}
\def\rTC{\operatorname{rTC}}
\def\wTC{\operatorname{wTC}}
\def\sTC{\operatorname{sTC}}
\def\TCP{\operatorname{TC^{Pav}}}
\def\TCRD{\operatorname{TC^{RD}}}
\def\TCMW{\operatorname{TC^{MW}}}
\def\TCS{\operatorname{TC^{S}}}
\def\pt{\operatorname{pt}}
\def\pr{\operatorname{pr}}
\def\wsecat{\operatorname{wsecat}}
\def\cat{\operatorname{cat}}
\def\wcat{\operatorname{wcat}}
\def\qscat{\operatorname{qscat}}
\def\srelcat{\operatorname{srelcat}}
\def\relcat{\operatorname{relcat}}
\def\secat{\operatorname{secat}}
\def\sec{\operatorname{secat}}
\def\id{\operatorname{id}}
\def\zcl{\operatorname{zcl}}
\def\ts{\times}
\def\ov{\overline}
\long\def\forget#1\forgotten{} %
\begin{document}
\title[]{Relative LS categories and higher topological complexities of maps}

\author[Y. B. Rudyak]{Yuli B. Rudyak}
\address{Department of Mathematics, University of Florida, Gainesville, FL 32611-8105}
\email{rudyak@ufl.edu}

\author[S. Sarkar]{Soumen Sarkar}
\address{Department of Mathematical, Indian Institute of Technology Madras, Chennai 600036, India}
\email{soumen@iitm.ac.in}

\date{\today}
\subjclass[2010]{55M30, 55S40}
\keywords{Lusternik-Schnirelmann category, sectional category, topological complexity}
\thanks{
}

\maketitle
\abstract

In this paper, we study three relative LS categories of a map and study some of their properties. Then we introduce the `higher topological complexity' and `weak higher topological complexity' of a map. Each of them are homotopy invariants. We discuss some lower and upper bounds of these in invariants and compare them with previously known `topological complexities' of a map.  
\endabstract

\section{Introduction}
Let $p \colon E \to B$ be a fibration where $B$ is a path connected CW
space. The ``genus'' of  $p$  was introduced by Schwarz in \cite{sv}, and it is the minimum cardinality of the open coverings of $B$ such that on each open set in the covering there is a section of $p$. If no such integer exists, then by convention the genus of  $p $ is $ \infty$. James \cite{J} replaced the overworked term ``genus'' by ``sectional category'' which is denoted by $\secat(p)$. Now by convention people agree that $\secat p$ is one less than the Schwarz genus of $p$ in \cite{sv}. In particular, $\secat(p)=0$ if and only if $p$ has a section. We note that if $E$ is contractible and the map $p$ is surjective then $\secat(p)$ is equal to the classical Lusternik--Schnirelmann category (in short LS category throughout this paper) of  $B$. See \cite{CLOT} for  properties and applications  of the sectional category and the LS category.  

\vspace{0.1 cm}  

There is a generalization of LS category called the category of a map, \cite{BeGa}. We recall that the LS category of $f$ is the minimum number $\cat(f)$ such that $X$ can be covered by $\cat(f) + 1$ open subsets and the restriction of $f$ to each of these  open subsets is null-homotopic.  In particular, if $\id_X \colon X \to X$ is the identity map, then $\cat(\id_X) = \cat(X)$.  It is a homotopy invariant and
  \begin{equation}\label{eq_prod}
\cat(f \times g) \leq \cat(f) + \cat(g), \quad \cat(g\circ f)\leq \min\{\cat(g), \cat(f)\},
\end{equation} see \cite{CLOT, Sta}. The first inequality may be known as the product formula for the category of maps. 

\vspace{0.1 cm}  

Farber \cite{Far} introduced the topological complexity of a configuration space  to understand the navigational complexity of the motion of a robot. Interestingly, this invariant is also a particular case of the sectional category. We denote  the closed unit interval $[0,1]$ by $I$. Let $X$ be a path connected Hausdorff space and $X^{I}$ the free path space in $X$ equipped with the compact-open topology, and let
\[
\pi \colon X^I  \to X \times X
\]
be the free path fibration defined by $\pi(\alpha) = (\alpha(0), \alpha(1))$ for $\alpha\in X^I$. Then  $\secat(\pi)$ is equal to Farber's topological complexity, denoted by $\TC (X)$.

\m So, we have close relatives: the LS category and the topological complexity. Both are numerical invariants and are special cases of Schwarz genus (sectional category).
In view of the parallelism between LS category and TC, it seems reasonable to loop the presentation and introduce TC of mappings.

In this way A. Dranishnikov asked for an appropriate definition of the topological complexity of a map in his talk at the CIEM, Castro Urdiales in 2014. 
To answer this, several people developed a concept  of the ``(higher) topological complexity'' of a map. The basic goal in defining a map $f \colon X\to Y$ from one space (of states) $X$ to another space (of states) $Y$  is to study suitable properties of one from another. For example, if $f \colon X \to Y$ is a map and $f$  is nice enough then the complexity of $X$ could be approximated by the complexity of $Y$, and this expectation is very natural. On the other hand, topologists expect that if $X$ and $Y$ are not very complex, then the complexity of $f$ should also not be high. In particular if $X$ and $Y$ are contractible, then from the topological point of view, the complexity of a map from $X$ to $Y$ should be trivial. Furthermore, it seems reasonable to expect that topological complexity is a homotopy invariant.

 \vspace{0.1 cm}  We note that most of the definition of the topological complexity of a map lack basic above mentioned properties. For example, \cite {Pav, RaDe} violate these expectations. Our definition (see Section \ref{sec_top_comp_map} below) confirm the expectation, and so we believe that it is more appropriate according to a topological point of view.

\vspace{0.1 cm}  
Murillo and Wu~\cite{MuWu} gave a definition of the ``topological complexity'' of a (work) map which is a homotopy invariant. Our approach is different. 

 \vspace{0.1 cm}  
 The paper is organized as follows.
 In Section \ref{sec_rel_cat_weight}, we introduce the concept of relative LS category, quasi-strong LS category and strong relative category of a map which are denoted by $\relcat(f)$, $\qscat(f)$, and $\srelcat(f)$, respectively.  We study several properties of these invariants which generalize some of the properties of the category of a map. We show that the product formula holds for these new invariants, see Proposition \ref{prop_prod_relcat}, \ref{prop_prod_Acatf} and \ref{prop_prod_srcat}. We  give a cohomological lower bound of $\srelcat(f)$ in Theorem \ref{thm:cohom_low_bnd}.

\vspace{0.1 cm}  
In Section \ref{sec_top_comp_map}, we use the quasi-strong LS category to introduce the concept of ``higher topological complexity'', denoted by $\TC_n(f),\, n=2,3, \ldots, $,  of a map $f \colon X \to Y$. We show that it is a homotopy invariant and the growth of $\TC_n(f)$ is linear with respect to $n$. We compare our definition of $\TC_2(f)$ with the ``topological complexity'' of $f$ of \cite{Pav}, \cite{RaDe}, and \cite{MuWu}. 

 \m In Section \ref {sec_wtop_comp_map}, we recall the concept of weak sectional category and weak topological complexity following \cite{GaVa}. Then we introduce the concept of weak higher topological complexity for a map and study a few properties of this invariant. 

\m We note that a preliminary version of the paper appeared in Section 2 and 3 of \cite{RS1}.

\section{Some relative LS categories} \label{sec_rel_cat_weight}
 Many topological constructions exploit relative version, informally saying to use an object ``modulo subspaces''. In this section, we introduce three types of relative LS categories of a map and study some of its properties. In this paper, all pair $(X,A)$ of topological spaces are assumed to be CW pair and all maps are continuous.  

\begin{definition}[{\cite[Definition 7.1]{CLOT}}]\label{d:clot}
Let $(X, A)$ be a CW pair. The {\em relative category} $\cat(X, A)$ is the least non-negative integer $k$ such that $X$ can be covered by open sets $V_0, V_1, \ldots, V_k$ with $A\subseteq V_0$  and such that, for $i\geq 1$, the set $V_i$ is contractible in $X$, and there exists a homotopy of pairs $H \colon (V_0 \times I, A \times I) \to (X, A)$ with $H(-, 1)$ is given by the inclusion map $V_0 \hookrightarrow X$ and $H(V_0, 0) \subseteq A$. If no such $n$ exists, we say this is infinity.
\end{definition}

Extending the above definition, one can introduce the concept of the category of maps between the pairs of spaces.

\begin{definition}\label{d:relcat_map}
Let $f \colon (X, A)~\to~(Y, B)$ be a map of CW pairs.  The {\em relative category} of $f$, denoted by $\relcat(f)$, is the least non-negative integer $n$ such that $X$ can be covered by open sets $V_0, V_1, \ldots, V_n$ with $A \subseteq V_0$, and for $1\leq i \leq n$, the map $f|_{V_i}$ is inessential, and there exists a homotopy of pairs $H \colon (V_0 \times I, A \times I) \to (Y, B)$ with $H(-, 1)$ is the map $f|_{V_0}$ and $H(V_0, 0) \subseteq B$. If no such $n$ exists, we say this is infinity.
\end{definition}

Note that if $\id \colon (X, A) \to (X, A)$ is the identity map then $\relcat(\id) = \cat(X, A)$. Also from Definition \ref{d:clot} and \ref{d:relcat_map}, one gets the following.

\begin{prop}\label{prop_hom_inv_relcatf}
\begin{enumerate}[\rm (i)]
\item The number $ \relcat(f)$ is a homotopy invariant.
\item If $f \colon (X, A) \to (Y, B)$ and $g \colon (Y, B) \to (Z, C)$, then  $$\relcat(g \circ f) \leq \min \{\relcat(f), \relcat(g)\}.$$ In particular, $\relcat(f) \leq \min \{\cat(X, A), \cat(Y, B)\}$.
\end{enumerate}
\end{prop}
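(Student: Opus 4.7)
The plan is to prove (ii) by two separate cover-manipulation arguments and then deduce (i) by concatenating the given pair-homotopy with the homotopy between the two maps.

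For $\relcat(g\circ f)\leq \relcat(f)$, I would fix $n=\relcat(f)$ with witnessing cover $V_0,V_1,\ldots,V_n$ of $X$ and pair homotopy $H\colon(V_0\times I, A\times I)\to(Y,B)$, and use the same cover for $g\circ f$. For $i\geq 1$, $(g\circ f)|_{V_i}=g\circ f|_{V_i}$ is inessential as a composition with a null-homotopic map; for $V_0$, the post-composition $g\circ H\colon(V_0\times I, A\times I)\to(Z,C)$ is the required pair homotopy, since $g(B)\subseteq C$ yields $(g\circ H)(A\times I)\subseteq C$ and $(g\circ H)(V_0,0)\subseteq C$, while $(g\circ H)(-,1)=(g\circ f)|_{V_0}$. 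For $\relcat(g\circ f)\leq \relcat(g)$, I would take a witnessing cover $U_0,U_1,\ldots,U_m$ of $Y$ with pair homotopy $K\colon(U_0\times I, B\times I)\to(Z,C)$ and pull back along $f$, setting $W_i=f^{-1}(U_i)$. Since $f(A)\subseteq B\subseteq U_0$ we have $A\subseteq W_0$; for $i\geq 1$, $(g\circ f)|_{W_i}$ factors through the null-homotopic $g|_{U_i}$; and $\widetilde H(x,t):=K(f(x),t)$ gives a pair homotopy $(W_0\times I, A\times I)\to(Z,C)$ with the required endpoints. The in-particular assertion $\relcat(f)\leq \min\{\cat(X,A),\cat(Y,B)\}$ then follows by applying these inequalities to the factorizations $f=f\circ\id_{(X,A)}$ and $f=\id_{(Y,B)}\circ f$, using $\relcat(\id_{(X,A)})=\cat(X,A)$ noted immediately before the proposition.

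For (i), I interpret homotopy invariance as: if $f\simeq f'\colon(X,A)\to(Y,B)$ via a homotopy of pairs $F\colon(X\times I, A\times I)\to(Y,B)$, then $\relcat(f)=\relcat(f')$. The cover witnessing $\relcat(f)=n$ serves for $f'$ as well: for $i\geq 1$, $f'|_{V_i}\simeq f|_{V_i}$ is null-homotopic; for $V_0$, concatenating $H$ with $F|_{V_0\times I}$ produces a new homotopy that starts with a map into $B$ and ends at $f'|_{V_0}$, the pair condition on $A\times I$ surviving because both $H$ and $F$ satisfy it. Symmetry yields the equality.

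The only mildly delicate point throughout is verifying that each constructed homotopy remains an honest homotopy \emph{of pairs}, i.e.\ that the condition on $A\times I$ persists under post-composition with $g$, pullback along $f$, and the final concatenation; in every case the check is immediate from $f(A)\subseteq B$ and $g(B)\subseteq C$.
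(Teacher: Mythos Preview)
Your argument is correct and is precisely the routine verification the paper has in mind: the authors give no proof at all beyond the sentence ``from Definition~\ref{d:clot} and~\ref{d:relcat_map}, one gets the following,'' so you have simply supplied the standard details (post-compose the witnessing homotopies with $g$, pull back the cover along $f$, and concatenate with the pair homotopy for (i)). Nothing is missing or different.
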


Next we extend the concept of categorical sequence of a space for maps.  

\begin{definition}\label{d:catseq_map}
Let $f \colon (X, A)~\to~(Y, B)$ be a map of CW pairs. The {\em relative categorical sequence of length n} for $f$ is a sequence of open sets $P_0, P_1, \ldots, P_n=X$ with $A \subseteq P_0$ such that $P_{i+1} - P_i$ is a subset of an open set $V_{i+1}$ on which  the map $f|_{V_{i+1}}$ is inessential for $1\leq i \leq n-1$, and there exists a homotopy of pairs $H \colon (P_0 \times I, A \times I) \to (Y, B)$ with $H(-, 1)$ is the map $f|_{P_0}$ and $H(P_0, 0) \subseteq B$.
\end{definition}
The proof of the following proposition is similar to that of \cite[Lemma 1.36]{CLOT} and generalizes the later.

\begin{prop}\label{prop_rel_catseq}
Let $f \colon (X, A)~\to~(Y, B)$ be a map of CW pairs such that $X$ is path-connected. Then $f$ has relative categorical sequence of length $n$ if and only if $\relcat(f) \leq n$.
\end{prop}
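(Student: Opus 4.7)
The plan is to prove each direction by an explicit translation between the two combinatorial structures, covers on one side and filtrations on the other. Both directions should be immediate once one sees the right bijection; the only care needed is to keep track of the ``special'' open set containing $A$ and the homotopy of pairs associated with it.

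For the direction $(\Leftarrow)$, I would start with a cover $V_0, V_1, \ldots, V_n$ realizing $\relcat(f) \leq n$, together with the homotopy of pairs $H \colon (V_0 \times I, A \times I) \to (Y, B)$ from Definition~\ref{d:relcat_map}. The idea is to take partial unions: set $P_i := V_0 \cup V_1 \cup \cdots \cup V_i$ for $i = 0, 1, \ldots, n$. Each $P_i$ is open, the sequence is nested with $P_n = X$, and $A \subseteq V_0 = P_0$. Since $P_{i+1} = P_i \cup V_{i+1}$, one has $P_{i+1} \setminus P_i \subseteq V_{i+1}$, and $f|_{V_{i+1}}$ is inessential by hypothesis. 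The same $H$ serves as the required homotopy of pairs on $P_0 = V_0$. This yields a relative categorical sequence of length $n$.

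For the direction $(\Rightarrow)$, I would run the construction in reverse. Given a relative categorical sequence $P_0, P_1, \ldots, P_n = X$ together with the open sets $V_1, \ldots, V_n$ witnessing the condition on $P_{i+1} \setminus P_i$, define $V_0 := P_0$. Then $A \subseteq V_0$, the required homotopy of pairs comes for free from the definition of the categorical sequence, each $f|_{V_i}$ is inessential for $i \geq 1$, so the only thing to verify is that $V_0, V_1, \ldots, V_n$ actually cover $X$. This follows by induction: $P_0 = V_0$, and assuming $P_i \subseteq V_0 \cup \cdots \cup V_i$, from $P_{i+1} \setminus P_i \subseteq V_{i+1}$ one gets $P_{i+1} \subseteq V_0 \cup \cdots \cup V_{i+1}$, so $X = P_n$ is covered.

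There is no real obstacle here; if anything, the only subtle point is making sure the clause about $A \subseteq P_0$ and the homotopy of pairs $(P_0 \times I, A \times I) \to (Y, B)$ is carried through intact under both constructions, since this is the piece that distinguishes the relative notion from the absolute one treated in \cite[Lemma~1.36]{CLOT}. Path-connectedness of $X$ does not seem to enter the argument explicitly but is inherited from the setting of \cite[Lemma~1.36]{CLOT} and ensures the inessentiality condition on $f|_{V_i}$ has its usual unambiguous meaning (any two constant maps are homotopic).
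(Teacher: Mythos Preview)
Your argument is correct and is exactly the standard translation between covers and categorical sequences (partial unions in one direction, extracting the cover in the other), which is precisely what the paper has in mind when it says the proof is ``similar to that of \cite[Lemma 1.36]{CLOT}.'' The only thing to note is that your covering induction in the $(\Rightarrow)$ direction uses $P_{i+1}\subseteq P_i\cup(P_{i+1}\setminus P_i)$, which holds regardless of whether the $P_i$ are nested, so the argument goes through as written.
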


\begin{prop}\label{prop_prod_relcat}
Let $f \colon (X, A) \to (Y, B)$ and $g \colon (Z, C) \to (W, D)$ be two maps such that $X  \times Z$ is a normal space. Then, for the map  $$f \times g \colon (X \times Z, X \times C \cup A \times Z) \to (Y \times W, Y \times D \cup B \times W),$$
$\relcat(f \times g) \leq \relcat(f) + \relcat(g)$.
\end{prop}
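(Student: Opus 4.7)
Let $m = \relcat(f)$ and $n = \relcat(g)$, with data: open covers $\{V_i\}_{i=0}^m$ of $X$ (satisfying $A \subseteq V_0$, null-homotopies for $f|_{V_i}$ when $i \geq 1$, and a pair homotopy $H \colon (V_0 \times I, A \times I) \to (Y, B)$ with $H(-,1) = f|_{V_0}$ and $H(V_0, 0) \subseteq B$) and $\{W_j\}_{j=0}^n$ of $Z$ (analogously, with $K \colon (W_0 \times I, C \times I) \to (W, D)$). The plan is to follow the classical $\cat$-product formula to organize the boxes $V_i \times W_j$ into $m + n + 1$ open sets covering $X \times Z$, while reserving the $0$th slot for an open neighborhood of $A' := X \times C \cup A \times Z$ on which the stronger pair-homotopy axiom of Definition \ref{d:relcat_map} can be verified. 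Using normality of $X \times Z$ and the shrinking lemma, get open $V_i' \subseteq \overline{V_i'} \subseteq V_i$ and $W_j' \subseteq \overline{W_j'} \subseteq W_j$ still covering, with $A \subseteq V_0'$ and $C \subseteq W_0'$. For each $k = 1, \ldots, m + n$, I would group the boxes $V_i \times W_j$ with $i + j = k$ into one open set $U_k$: any such pair has $i \geq 1$ or $j \geq 1$, so the relevant null-homotopy in that factor extends to a null-homotopy of $(f \times g)|_{V_i \times W_j}$, and the standard disjointness-by-shrinking argument patches these into a single $U_k$ with $(f \times g)|_{U_k}$ inessential.

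For the $0$th slot take $U_0 = (V_0 \times Z) \cup (X \times W_0)$, which is open and contains $A'$. The main obstacle is building the homotopy of pairs $L \colon (U_0 \times I, A' \times I) \to (Y \times W,\ (Y \times D) \cup (B \times W))$ with $L(-, 1) = (f \times g)|_{U_0}$ and $L(U_0 \times \{0\}) \subseteq (Y \times D) \cup (B \times W)$. I would interpolate between the two natural partial homotopies $(H(x, t), g(z))$ on $V_0 \times Z$ (ending in $B \times W$) and $(f(x), K(z, t))$ on $X \times W_0$ (ending in $Y \times D$) via the two-parameter family
\[ \Theta(x, z, s, t) = \bigl(H(x, \max(t,\, 2s - 1)),\ K(z, \max(t,\, 1 - 2s))\bigr), \]
whose $t = 0$ edge follows the path $(H(x, 0), K(z, 1-2s)) \in B \times W$ for $s \in [0, 1/2]$ and then $(H(x, 2s-1), K(z, 0)) \in Y \times D$ for $s \in [1/2, 1]$, so it remains in the fat-wedge throughout. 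Setting $L(x, z, t) = \Theta(x, z, s(x, z), t)$ for a Urysohn-type function $s \colon U_0 \to [0, 1]$ with $s = 0$ when $z \notin W_0$ and $s = 1$ when $x \notin V_0$ (for instance $s = \mu_2(z)/(\mu_1(x) + \mu_2(z))$ for continuous $\mu_i$ vanishing precisely off $V_0$ and $W_0$, which exist since CW complexes are perfectly normal), together with the conventions $H(x, 1) := f(x)$ and $K(z, 1) := g(z)$ to extend the formula when $x \notin V_0$ or $z \notin W_0$ (precisely where $s$ forces a $\max$ to reach $1$), yields a well-defined continuous $L$.

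The remaining verifications are routine: $L(-, 1) = (f \times g)|_{U_0}$ is immediate from $\max(1, \ast) = 1$; $L(U_0 \times \{0\})$ and $L(A' \times I)$ lie in $(Y \times D) \cup (B \times W)$ by the case analysis on $s \leq 1/2$ versus $s \geq 1/2$, combined with $H(A \times I) \subseteq B$ and $K(C \times I) \subseteq D$; continuity across the boundaries $\partial V_0 \times W_0$ and $V_0 \times \partial W_0$ follows since $s \to 1$ (respectively $0$) there, driving the relevant $\max$ to $1$ and letting the extensions of $H$ and $K$ take over. Together with the $U_k$ from the first paragraph, the cover $U_0 \cup U_1 \cup \cdots \cup U_{m+n}$ realizes $\relcat(f \times g) \leq m + n$. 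The crux of the plan is the $\max$-interpolation, tailored so that the $t = 0$ edge of $\Theta$ threads through $B \times D$ and stays inside the fat-wedge $(Y \times D) \cup (B \times W)$ for every intermediate value of $s$; everything else is bookkeeping with the classical product formula.
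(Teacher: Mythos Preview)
Your approach differs from the paper's in organization: the paper converts the two covers into relative categorical sequences (Proposition~\ref{prop_rel_catseq}) and then appeals to the argument of \cite[Theorem~1.37]{CLOT}, whereas you group the boxes $V_i\times W_j$ directly by the value of $i+j$. Both are standard routes to the product formula. The substantive step is the pair homotopy on the $0$th set, and here your $\max$--interpolation $\Theta$ with the Urysohn blend $s$ is the right idea; the paper's own proof simply writes $L(x,y,t)=(H(x,t),K(y,t))$, a formula that is literally defined only on $U_0\times V_0$ rather than on a neighbourhood of $X\times C\cup A\times Z$, so your treatment of this point is in fact more careful than the paper's.

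There is, however, a genuine gap in your continuity verification. You choose $\mu_1$ to vanish \emph{precisely} off $V_0$, so if $(x_n,z_n)\to(x_0,z_0)$ with $x_n\in V_0$ but $x_0\notin V_0$ (and $z_0\in W_0$), then $s_n\to 1$ and $u_n:=\max(t_n,2s_n-1)\to 1$. You then need $H(x_n,u_n)\to f(x_0)$, but this does not follow: $H$ is only given on $V_0\times I$ with $H(-,1)=f|_{V_0}$, and nothing controls $H(x_n,u_n)$ as $x_n$ escapes through $\partial V_0$ while $u_n<1$. (Toy model: $V_0=(0,1)$, $f\equiv 0$, $H(x,t)=(1-t)/x$; with $(x_n,u_n)=(1/n,\,1-1/n)$ one gets $H(x_n,u_n)=1\not\to 0$.) The extension ``$H(x,1):=f(x)$'' therefore does not glue continuously to $H$ on $(V_0\times I)\cup(X\times\{1\})$. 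The fix is to use the shrinkings you already set up: take $U_0=(V_0'\times Z)\cup(X\times W_0')$ and let $\mu_1$ vanish precisely off $V_0'$ (similarly $\mu_2$ off $W_0'$). Then any such limit point satisfies $x_0\in\overline{V_0'}\subseteq V_0$, where $H$ is defined and continuous, so $H(x_n,u_n)\to H(x_0,1)=f(x_0)$. With this adjustment your argument goes through.
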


\begin{proof}
Let   $\relcat(f)=k$ and $\relcat(g)=\ell$. Suppose that $X$ can be covered by open sets $U_0, U_1, \ldots, U_{k}$ with $A \subseteq U_0$,  and suppose that $Z$ can be covered by open sets $V_0, V_1, \ldots, V_{\ell}$ with $C \subseteq V_0$  such that the covering $\{V_i\}$ satisfies the  conditions in Definition \ref{d:relcat_map}. Let $P_i := \cup_{s=0}^i U_s $ and $Q_j := \cup_{t=0}^j V_j$ for $i=0, \ldots, k$ and $j=0, \ldots, \ell$. Then $P_i - P_{i-1} \subseteq U_i$ and $Q_j - Q_{j-1} \subseteq V_j$  for $i=1, \ldots, k$ and $j=1, \ldots, \ell$. Let $R_0 := X \cup V_0 \cup U_0 \cup Y$. So it is an open subset of $X \times Y$. Let $H \colon (U_0 \times [0,1], A \times [0, 1]) \to (Y, B)$ and $K \colon (V_0 \times [0,1], C \times [0, 1]) \to (W, D)$ be two homotopies of pairs such that $H(-, 1)$ is the map $f|_{U_0}$ with $H(U_0, 0) \subseteq B$, and  $K(-, 1)$ is the map $g|_{V_0}$ with $K(V_0, 0) \subseteq D$.  Put
\[
L(x, y, t) =(H(x, t), K(y, t))
\] 
for $(x, y) \in R_0$ and $t \in [0, 1]$. Then 
\[
L \colon (W_0 \times I, (X \times C \cup A \times Z)\times I) \to (Y \times W, Y \times D \cup B \times W)
\]
 is a homotopy of pairs with $L(-, 1)=f \times g|_{W_0}$ and $L(W_0,0)~\subseteq~B~\times~D$. 
 
 Define open subsets of $X \times Y$ by
\[
R_m:= \bigcup_{s =1}^m P_s \times Q_{m+1-s}
\]
for $m \geq 1$. Here $P_s = \emptyset$ if $s >k$ and $Q_r = \emptyset$ if $r >\ell$. Now using the arguments in  the proof of \cite[Theorem 1.37]{CLOT} and the restriction of the products of homotopies on $U_i \times V_j$ for some $i$ and $ j$, one can show that $R_0, R_1, \ldots, R_{m+\ell}$ is a relative categorical sequence of length $k+\ell$ for the map $f \times g$. Thus by Proposition \ref{prop_rel_catseq}, $\relcat(f \times g) \leq k+\ell$.    
\end{proof}
Note that if $A, B, C$ and $D$ are empty subsets then proposition \ref{prop_prod_relcat}  turns into \cite[Theorem 1.37]{CLOT}.

\begin{definition}\label{d:A-cat_map}
Given two CW pairs $(B, C)$ and $(X, A)$, consider a map $f \colon (B, C)~\to~(X, A)$. Define the {\em quasi-strong} LS category of the map $f$, denoted by $\qscat(f)$, is the least integer $n$ such that $B$ can be covered by open sets $V_0, V_1, \ldots, V_n$ such that  there is a homotopy $H_i \colon V_i \times [0,1] \to X$ such that $H_i{(-, 1)}$ is the map $f|_{V_i}$ and $H_i(V_i, 0) \subseteq A$ for $i=0, \ldots, n$. If no such $n$ exists, we say this is infinity.
\end{definition}
Note that if $\id \colon (X, A) \to (X, A)$ is the identity map then $\qscat(\id)$ is the Clapp-Puppe variation \cite{ClPu} of LS category. We may denote it by $\qscat(X, A):=\qscat(\id)$. From Definition \ref{d:A-cat_map}, one gets the following.

\begin{prop}\label{prop_inv_qscatf}
\begin{enumerate}[\rm (i)]
\item The number $\qscat(f)$ is a homotopy invariant.
\item   If $f \colon (B, C) \to (X, A)$ and $g \colon (X, A) \to (Y, D)$, then  $$\qscat(g \circ f) \leq \min \{\qscat(f), \qscat(g)\}.$$
\end{enumerate}
\end{prop}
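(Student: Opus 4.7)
My plan for part (i) is to establish homotopy invariance in two standard stages. First, suppose $f, f' \colon (B, C) \to (X, A)$ are homotopic as maps of pairs via a homotopy $K \colon (B \times I, C \times I) \to (X, A)$. If $V_0, \ldots, V_n$ together with homotopies $H_i \colon V_i \times I \to X$ witness $\qscat(f) \leq n$, then I would concatenate $H_i$ with $K|_{V_i \times I}$ (in the $t$ variable, reparametrised) to obtain homotopies whose time-$0$ image still lies in $A$ and whose time-$1$ restriction is $f'|_{V_i}$. This gives $\qscat(f') \leq n$, and reversing $K$ yields equality. Second, invariance under a pair homotopy equivalence $\phi \colon (B', C') \to (B, C)$ is handled by pulling the cover back to $\phi^{-1}(V_i)$ and precomposing the $H_i$'s with $\phi$; invariance under a pair equivalence $\alpha \colon (X, A) \to (X', A')$ is handled by post-composing the $H_i$'s with $\alpha$ (using part (i) above to compare $f$ with $\psi \circ \alpha \circ f$ where $\psi$ is a pair-homotopy inverse of $\alpha$).

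For part (ii) I would prove the two inequalities independently. To show $\qscat(g \circ f) \leq \qscat(f)$, take a covering $V_0, \ldots, V_n$ of $B$ together with homotopies $H_i \colon V_i \times I \to X$ realising $\qscat(f) = n$, and replace each $H_i$ by $g \circ H_i \colon V_i \times I \to Y$. At $t = 1$ this is $(g \circ f)|_{V_i}$, and at $t = 0$ it maps $V_i$ into $g(A) \subseteq D$ because $g$ is a map of pairs. To show $\qscat(g \circ f) \leq \qscat(g)$, take a covering $W_0, \ldots, W_m$ of $X$ with homotopies $K_j \colon W_j \times I \to Y$ realising $\qscat(g) = m$, and pull everything back to $B$ along $f$: by continuity the sets $f^{-1}(W_j)$ form an open cover of $B$, and $(x, t) \mapsto K_j(f(x), t)$ is a homotopy on $f^{-1}(W_j) \times I$ whose time-$1$ restriction is $(g \circ f)|_{f^{-1}(W_j)}$ and which maps $f^{-1}(W_j) \times \{0\}$ into $D$.

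The argument is essentially routine and parallels Proposition \ref{prop_hom_inv_relcatf} closely; the absence in Definition \ref{d:A-cat_map} of any distinguished set $V_0$ with a relative homotopy condition actually makes matters simpler here. The only point that requires care is to check at each step that the pair structure is preserved, namely that the ``time-$0$ image lies in the designated subspace'' condition survives concatenation with a pair homotopy, post-composition with a map of pairs, and precomposition with any continuous map. Since each of these checks is a single set-theoretic containment, I do not expect any serious obstacle.
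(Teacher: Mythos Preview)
Your proposal is correct and is precisely the routine verification the paper has in mind: the paper gives no proof at all, merely stating that the proposition follows ``from Definition~\ref{d:A-cat_map}.'' Your concatenation, post-composition, and pullback arguments are the standard ones, and your remark that the absence of a distinguished $V_0$ with a relative condition makes this easier than Proposition~\ref{prop_hom_inv_relcatf} is exactly right.
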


\begin{prop}\label{prop:ineqality1}
  $\cat(X, A) \leq {\rm cat}(X - A) + 1$ for any CW pair $(X,A)$.
\end{prop}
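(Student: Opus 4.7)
The plan is to start from a minimal contractible cover of $X-A$ and then annex a neighborhood of $A$ exploiting that $(X,A)$ is a CW pair. Suppose $\cat(X-A) = m$, so there exist open subsets $U_0, U_1, \ldots, U_m$ of $X - A$ which cover $X-A$ and such that each inclusion $U_i \hookrightarrow X - A$ is null-homotopic. Since $(X,A)$ is a CW pair, $A$ is closed in $X$, hence $X-A$ is open in $X$ and each $U_i$ is open in $X$ as well. Moreover, the composition $U_i \hookrightarrow X - A \hookrightarrow X$ is null-homotopic, so each $U_i$ is contractible in $X$.

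To produce the set $V_0$ required by Definition \ref{d:clot}, I would invoke the fact that a CW pair $(X,A)$ is a cofibration and an NDR pair. Concretely, there exists an open neighborhood $V_0 \supseteq A$ in $X$ together with a deformation retraction $r \colon V_0 \times I \to V_0$ such that $r(-,0) = \id_{V_0}$, $r(V_0, 1) \subseteq A$, and $r(a, t) \in A$ for every $a \in A$ and $t \in I$. Setting $H(x,t) := r(x, 1-t)$, viewed as a map into $X$, yields a homotopy of pairs
$$H \colon (V_0 \times I, A \times I) \to (X, A)$$
with $H(-,1)$ the inclusion $V_0 \hookrightarrow X$ and $H(V_0, 0) \subseteq A$, which is exactly the condition on $V_0$ demanded by Definition \ref{d:clot}.

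I would then conclude by observing that the collection $\{V_0, U_0, U_1, \ldots, U_m\}$ consists of $m+2$ open sets that cover $X$ (since the $U_i$ cover $X-A$ and $V_0 \supseteq A$), with $A \subseteq V_0$, each $U_i$ contractible in $X$, and $V_0$ carrying the prescribed homotopy of pairs. Hence $\cat(X,A) \le m+1 = \cat(X-A)+1$.

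The only genuine subtlety is the existence of the neighborhood $V_0$ with its deformation retraction onto $A$; this is the step that really uses that $(X,A)$ is a CW pair (rather than a bare topological pair), and it can be justified either by citing the standard fact that CW inclusions are cofibrations (hence $(X,A)$ is an NDR pair) or, more concretely, by building $V_0$ cell by cell using collar neighborhoods in each attached cell. Everything else is routine bookkeeping of open sets.
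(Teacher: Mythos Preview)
Your proof is correct and follows essentially the same approach as the paper's: take a categorical open cover of $X-A$ (open in $X$ since $A$ is closed), and adjoin an open neighborhood $V_0\supseteq A$ that deformation retracts onto $A$, whose existence is guaranteed by the CW-pair hypothesis. The paper's argument is a three-sentence sketch of exactly this, while you have spelled out the details (the NDR/cofibration justification for $V_0$, the reparametrization to match Definition~\ref{d:clot}, and the verification that the $U_i$ remain open and contractible in $X$).
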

\begin{proof}
Let $V_0$ be an open subset of $X$ containing $A$. Assume that there is a deformation retract from $V_0$ onto $A$. Such $V_0$ exists, since $(X, A)$ is a CW-pair. Together with this any categorical covering of $X - A$ satisfies Definition \ref{d:clot}, since $X -A$ is open in $X$.
\end{proof}

\begin{prop}\label{prop_prod_Acatf}
Let $f_i \colon (B_i, C_i) \to (X_i, A_i)$ be a map for $i=1, 2$ such that $B_1 \times B_2$ is a normal space. Then, for the map  \[
f_1 \times f_2 \colon (B_1 \times B_2, C_1 \times C_2) \to (X_1 \times X_1, A_1 \times A_2),
\]
we have $\qscat(f_1 \times f_2) \leq \qscat(f_1) + \qscat(f_2)$.
\end{prop}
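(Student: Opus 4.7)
The plan is to imitate the proof of Proposition \ref{prop_prod_relcat}. Set $k = \qscat(f_1)$ and $\ell = \qscat(f_2)$, and fix open covers $\{U_0, U_1, \ldots, U_k\}$ of $B_1$ and $\{V_0, V_1, \ldots, V_\ell\}$ of $B_2$ together with the witnessing homotopies $H_i \colon U_i \times I \to X_1$, $K_j \colon V_j \times I \to X_2$ from Definition \ref{d:A-cat_map}, so that $H_i(-, 1) = f_1|_{U_i}$, $H_i(U_i, 0) \subseteq A_1$, $K_j(-, 1) = f_2|_{V_j}$, and $K_j(V_j, 0) \subseteq A_2$. For each pair $(i, j)$, the product homotopy $L_{i, j}(x, y, t) = (H_i(x, t), K_j(y, t))$ on $U_i \times V_j$ equals $(f_1 \times f_2)|_{U_i \times V_j}$ at $t = 1$ and lands in $A_1 \times A_2$ at $t = 0$, so each product $U_i \times V_j$ individually already satisfies the $\qscat$ condition for $f_1 \times f_2$.

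The issue is the cardinality of the cover: the products give $(k+1)(\ell+1)$ pieces but we need to realise the bound $k + \ell + 1$. I would therefore group them by diagonal index and set $W_m = \bigcup_{i + j = m} U_i \times V_j$ for $m = 0, 1, \ldots, k + \ell$. These are open, and since every $(x, y) \in B_1 \times B_2$ lies in some $U_i \times V_j$, the $W_m$ form an open cover of $B_1 \times B_2$ of the required size.

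The main obstacle is then to produce, on each $W_m$, one continuous homotopy $L_m \colon W_m \times I \to X_1 \times X_2$ with $L_m(-, 1) = (f_1 \times f_2)|_{W_m}$ and $L_m(W_m, 0) \subseteq A_1 \times A_2$: the natural candidates $L_{i, j}$ with $i + j = m$ agree at $t = 1$, but in general disagree at interior times on the overlaps $(U_i \cap U_{i'}) \times (V_j \cap V_{j'})$. I would handle this in the same manner as in Proposition \ref{prop_prod_relcat} and \cite[Theorem 1.37]{CLOT}: invoke normality of $B_1 \times B_2$ to choose shrinkings $U_i' \subseteq \overline{U_i'} \subseteq U_i$ and $V_j' \subseteq \overline{V_j'} \subseteq V_j$ still covering, take a subordinate partition of unity on $W_m$, and reparametrise each $L_{i, j}$ in the time variable so that it is stationary at $(f_1 \times f_2)|_{U_i \times V_j}$ near $t = 1$; the reparametrised pieces then agree on a neighbourhood of $t = 1$, and the partition of unity lets one assemble them into a continuous $L_m$ on $W_m$ whose $t = 0$ image still lies in $A_1 \times A_2$ (this last property is inherited since every $L_{i, j}$ has that property).

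Carrying out this construction for every $m$ exhibits an open cover $W_0, W_1, \ldots, W_{k + \ell}$ of $B_1 \times B_2$ verifying Definition \ref{d:A-cat_map} for $f_1 \times f_2$, which yields $\qscat(f_1 \times f_2) \leq k + \ell = \qscat(f_1) + \qscat(f_2)$. The only nontrivial step is the gluing described in the third paragraph; everything else is bookkeeping analogous to the proof of Proposition \ref{prop_prod_relcat}.
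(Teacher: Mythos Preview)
Your overall plan---group by diagonal index and cover $B_1\times B_2$ by $k+\ell+1$ open sets---is the same plan the paper uses (it simply refers back to Proposition~\ref{prop_prod_relcat}, hence to \cite[Theorem~1.37]{CLOT}). The observation that each single product $U_i\times V_j$ already carries a witnessing homotopy $L_{i,j}=(H_i,K_j)$ is correct and is the common starting point.

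The gap is in the third paragraph. A partition of unity on $W_m$ gives you scalar weights, but the target $X_1\times X_2$ carries no convex or linear structure, so there is no operation that ``assembles'' the maps $L_{i,j}$ into one map using those weights. Making every $L_{i,j}$ stationary near $t=1$ only forces agreement on $W_m\times[1-\eps,1]$; for smaller $t$ the various $L_{i,j}$ land at genuinely different points of $X_1\times X_2$ on the overlaps $(U_i\cap U_{i'})\times(V_j\cap V_{j'})$, and no weighted combination of such points is available. So the sentence ``the partition of unity lets one assemble them into a continuous $L_m$'' is exactly where the argument breaks.

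The repair that \cite[Theorem~1.37]{CLOT} (and hence the paper) actually uses is not to glue at all: one arranges, after the shrinking you already mention, that each diagonal piece can be taken as a \emph{disjoint} union of opens, each contained in a single $U_i\times V_j$ with $i+j=m$; on a disjoint union the homotopy is defined piecewise with no compatibility needed. Concretely this is packaged via the categorical-sequence formulation (Definition~\ref{d:catseq_map}, Proposition~\ref{prop_rel_catseq}): with $P_i=\bigcup_{s\le i}U_s$, $Q_j=\bigcup_{t\le j}V_t$ and $R_m=\bigcup_{s}P_s\times Q_{m-s}$, the difference $R_m\setminus R_{m-1}$ decomposes according to the \emph{minimal} indices $(i,j)$ with $x\in U_i$, $y\in V_j$, and these pieces are pairwise disjoint and sit inside $U_i\times V_j$. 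That disjointness---not a partition-of-unity blend---is the missing ingredient.
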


\begin{proof}
The proof is similar to the proof of Proposition \ref{prop_prod_relcat}.    
\end{proof}

More strongly, one can define the following.
\begin{definition}\label{d:srelcat}
Let $(X, A) $ be a CW pair.
\begin{enumerate}
\item An open subset $U$ of $X$ is called \emph{strongly relative categorical} if there is a homotopy $H \colon U \times I \to X$ such that $H{(-, 1)}$ is the inclusion $U \hookrightarrow X$, $H(U, 0) \subseteq A$, and $H(a, t) = a $ for any $(a, t) \in (U \cap A) \times [0, 1]$.

\item We define {\em strong relative} LS category of a pair $(X, A)$,  denoted by $\srelcat(X,A)$, to be the least integer $n$ such that $X$ can be cover by $n+1$ strongly relative categorical sets. If no such $n$ exists, we say this is infinity.
\end{enumerate}
\end{definition}
In future we frequently omit the abbreviation LS and say about strong relative category. Note that $\cat (X)=\cat(X, x_0) = \srelcat (X, x_0)$ where $ X$ is path-connected.

\m Now we introduce the concept of ``strong relative category of a map''. Then we study their several properties in the remaining of this section.

\begin{definition}\label{def_rel_inessential}
Let $g \colon (B, C) \to (X, A)$ be a map of pairs.
We say that $g$ is called \emph{strongly relatively inessential} map if there is a homotopy $H \colon B \times I \to X$ such that  $H(-, 1)$ is $g$, $H(B, 0) \subseteq A$, and $H(c, t) =g(c) \in A$ for all $c \in C$ and $t \in I$.
\end{definition}
 
\begin{definition}\label{d:srelcat_map}
Let $f \colon (B, C) \to (X, A)$ be a map of pairs. Consider an open cover $\{B_0, \ldots, B_{\ell}\}$ of $B$ such that $f|_{(B_i, B_i\cap C)}$ is strongly relatively inessential for all $i$. The minimum $\ell$ with this property is called the  \emph {strong relative category} of $f$ and is denoted by $\srelcat(f)$. If no such $\ell$ exists, we put  $\srelcat(f)=\infty$ .
\end{definition}

Clearly, if $C= \emptyset$ or $C = \pt$ and $A = \pt$ then $\srelcat(f)$ turns into a known invariant $\cat(f)$ for $f \colon B \to X$, see  \cite[Exercise 1.16]{CLOT}. Also, $\qscat(f) \leq \srelcat(f)$. We remark that a different concept of relative category of a map is appeared in the paper \cite{DoEl}.

\begin{thm}\label{thm_hom_inv}
The number $ \srelcat(f)$ is a relative homotopy invariant.
\end{thm}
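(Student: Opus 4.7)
The plan is to prove this by a direct concatenation argument. By symmetry it suffices to show that if $F \colon B \times I \to X$ is a relative homotopy from $f$ to $g$ (i.e.\ $F(-,0) = f$, $F(-,1) = g$, and $F(c, t) = f(c) = g(c)$ for every $c \in C$ and $t \in I$) and if $\srelcat(f) \leq \ell$, then $\srelcat(g) \leq \ell$. I would fix an open cover $\{B_0, \ldots, B_\ell\}$ of $B$ witnessing $\srelcat(f) \leq \ell$, together with homotopies $H_i \colon B_i \times I \to X$ satisfying $H_i(-, 1) = f|_{B_i}$, $H_i(B_i, 0) \subseteq A$, and $H_i(c, t) = f(c)$ for every $c \in C \cap B_i$ and $t \in I$. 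The aim is to prove that the \emph{same} cover $\{B_0, \ldots, B_\ell\}$ witnesses $\srelcat(g) \leq \ell$.

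For each $i$ the plan is to concatenate $H_i$ with $F|_{B_i \times I}$, setting
\[
K_i(b, t) = \begin{cases} H_i(b, 2t), & 0 \leq t \leq 1/2, \\ F(b, 2t-1), & 1/2 \leq t \leq 1, \end{cases}
\]
and then verify that $K_i$ witnesses that $g|_{(B_i, B_i \cap C)}$ is strongly relatively inessential. Continuity at $t = 1/2$ follows from $H_i(b, 1) = f(b) = F(b, 0)$. By construction $K_i(b, 0) = H_i(b, 0) \in A$ and $K_i(b, 1) = F(b, 1) = g(b)$. The crucial point is the constancy on $C$: for $c \in C \cap B_i$, on the first half $K_i(c, t) = H_i(c, 2t) = f(c)$, and on the second half $K_i(c, t) = F(c, 2t-1) = f(c)$ by the rel-$C$ property of $F$, so $K_i(c, \cdot) \equiv f(c) = g(c)$ as required.

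The main obstacle is the strong constancy requirement $K_i(c, t) = g(c)$ in Definition~\ref{def_rel_inessential}; it is precisely this requirement that forces us to use a \emph{relative} homotopy rather than merely a homotopy through maps of pairs, since without the condition $F(c, \cdot) \equiv f(c)$ the concatenation above would give $K_i(c, \cdot)$ only a path in $A$ from $f(c)$ to $g(c)$, not the required constant at $g(c)$. Under the rel-$C$ reading, however, this concatenation is exactly what is needed and no further cofibration argument is required. (If one wished to upgrade the theorem to invariance under arbitrary homotopies of pairs, one would additionally need to invoke the homotopy extension property of the CW pair $(B, C)$ to straighten $K_i(c, \cdot)$ to the constant $g(c)$ while preserving the endpoint conditions on $B_i \times \{0, 1\}$.)
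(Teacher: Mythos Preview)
Your proof is correct and follows essentially the same approach as the paper: both use the rel-$C$ homotopy to transport a strongly relatively inessential cover for one map to the other via concatenation, with the paper merely saying ``we use the homotopy $H$'' where you spell out the explicit formula $K_i$. Your version is in fact more detailed and carefully verifies the constancy condition on $C$ that the paper leaves implicit.
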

\begin{proof}
Let $f, g \colon (B, C) \to (X, A)$ be two maps that are homotopic relatively to $C$. Then there is a homotopy
$
H \colon B \times I \to X
$
 such that $H(b, 0) = f(b)$, $H(b, 1)=g(b)$ for all $b \in B$,
and $H(c, t) = f(c) = g(c) \in A$ for all $c \in C$ and $t \in I$. Let $B_i$ be an open subset of $B$ such that the map
\[
g|_{(B_i, B_i \cap C)} \colon (B_i, B_i\cap C) \to (X, A)
\]
 is strongly relatively inessential. Now we use the homotopy $H$ to conclude that  
 \[
 f|_{(B_i, B_i \cap C)} \colon (B_i, B_i\cap C) \to (X, A)
 \]
 is strongly relatively inessential. Therefore, $ \srelcat(f) \leq  \relcat(g)$. Similarly, the opposite inequality holds.
\end{proof}

Note that, for the identity map $\id \colon (X, A) \to (X, A)$, we have the equality $ \srelcat(\id) =  \srelcat(X, A)$. 
Moreover, we have the following.

\begin{proposition}\label{prop_inequality_1}
Let $f \colon (B, C) \to (X, A)$ and $g \colon (Y, D) \to (B, C)$ be two {\rm CW} maps. Then
\begin{enumerate}[\rm (i)]
\item $\srelcat(f) \leq \min\{\srelcat(B, C), ~~ \srelcat(X, A)\} $.
\item $\srelcat(f \circ g) \leq \min\{\srelcat(f), ~~ \srelcat(g)\} $.
\end{enumerate}
\end{proposition}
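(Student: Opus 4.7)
The plan is to prove each of the four inequalities implicit in the statement by transporting a cover witnessing the right-hand side through the appropriate map. The underlying point is that the class of strongly relatively inessential maps of pairs is closed under composition with an arbitrary map of pairs, both on the source and on the target side, and this gives all four bounds by symmetric arguments.

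First, for (i), I would split into the two bounds. For $\srelcat(f)\leq \srelcat(B,C)$, start with a strongly relative categorical cover $\{B_i\}$ of $B$ together with deformations $H_i\colon B_i\times I\to B$ as in Definition \ref{d:srelcat}. Post-composing with $f$ gives $f\circ H_i\colon B_i\times I\to X$, and I would verify the three conditions of Definition \ref{def_rel_inessential}: the time-$1$ map equals $f|_{B_i}$, the image at time $0$ lies in $f(C)\subseteq A$, and on $B_i\cap C$ the homotopy is stationary with values in $A$ because $H_i$ was stationary on $B_i\cap C$. For $\srelcat(f)\leq \srelcat(X,A)$, start with a cover $\{X_j\}$ of $X$ with deformations $K_j\colon X_j\times I\to X$, and pull back to the open cover $\{f^{-1}(X_j)\}$ of $B$. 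The homotopy $(b,t)\mapsto K_j(f(b),t)$ does the job; here the stationarity condition uses precisely that $K_j$ fixes $X_j\cap A$ pointwise, combined with $f(C)\subseteq A$.

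For (ii) the two bounds are dual to those of (i). To show $\srelcat(f\circ g)\leq \srelcat(g)$, post-compose the homotopies witnessing $\srelcat(g)$ with $f$; the image at time $0$ lands in $f(C)\subseteq A$, and the stationarity on $Y_i\cap D$ carries through because $g$ sends $D$ into $C$ and hence $f\circ g$ sends $D$ into $A$. To show $\srelcat(f\circ g)\leq \srelcat(f)$, pull the cover $\{B_i\}$ of $B$ witnessing $\srelcat(f)$ back to $\{g^{-1}(B_i)\}$ of $Y$ and precompose the homotopies $H_i$ with $g$; for $d\in g^{-1}(B_i)\cap D$ one uses $g(d)\in B_i\cap C$ to extract the stationarity of $H_i(g(\cdot),\cdot)$ at $d$ with value $f(g(d))\in A$.

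There is no substantive obstacle: the entire argument is a fourfold application of the three-item checklist of Definition \ref{def_rel_inessential}. The only mild care needed is that the pulled-back covers $\{f^{-1}(X_j)\}$ and $\{g^{-1}(B_i)\}$ are indeed open (immediate from continuity) and cover $B$ and $Y$ respectively (from surjectivity of the original covers onto $X$ and $B$). Once this bookkeeping is done, (i) and (ii) follow simultaneously, together with the earlier observation $\srelcat(\id)=\srelcat(X,A)$, which specialises (i) to recover the expected inequality $\srelcat(f)\leq \min\{\srelcat(B,C),\srelcat(X,A)\}$.
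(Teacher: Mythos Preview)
Your proposal is correct and follows essentially the same approach as the paper: compose the given homotopies with $f$ or $g$ (on the target or source side as appropriate) and verify the three conditions of Definition~\ref{def_rel_inessential}. The paper only writes out the first inequality of (i) explicitly and defers the rest with ``similarly''; you have simply spelled out all four cases in full.
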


\begin{proof}
We give a proof for (i), and the arguments for other one are similar.
Let $B_i$ be an open subset of $B$ such that $B_i$ satisfies Definition \ref{d:srelcat}(1). So there is a  homotopy $H \colon B_i \times I \to B$ such that $H{(-, 1)}$ is the inclusion $B_i \hookrightarrow B$, $H(U, 0) \subseteq C$, and $H(c, t) =c$ for any $c \in B_i \cap C$ and $t \in I$. Then the homotopy $f \circ H$ satisfies Definition \ref{def_rel_inessential}. Similarly, one can show that $ \srelcat (f) \leq \srelcat(X, A) $.
\end{proof}

\begin{prop}\label{prop_prod_srcat}
Let $f_i \colon (B_i, C_i) \to (X_i, A_i)$ be a map for $i=1, 2$ such that $B_1 \times B_2$ is a normal space. Then, for the map  $$f_1 \times f_2 \colon (B_1 \times B_2, C_1 \times C_2) \to (X_1 \times X_1, A_1 \times A_2),$$
$\srelcat(f_1 \times f_2) \leq \srelcat(f_1) + \srelcat(f_2)$.
\end{prop}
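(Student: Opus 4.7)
The plan is to emulate the categorical-sequence construction from the proof of Proposition \ref{prop_prod_relcat}, and then verify that the additional \emph{strong} hypothesis -- stationarity of the homotopy on points of $C_1 \times C_2$ -- is preserved by the combination procedure used there. Write $k = \srelcat(f_1)$ and $\ell = \srelcat(f_2)$, and pick open covers $\{U_0,\ldots,U_k\}$ of $B_1$ and $\{V_0,\ldots,V_\ell\}$ of $B_2$ together with the witnessing homotopies $H_i \colon U_i \times I \to X_1$ and $K_j \colon V_j \times I \to X_2$ supplied by Definitions \ref{d:srelcat_map} and \ref{def_rel_inessential}.

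First I would note that on each product piece $U_i \times V_j$ the coordinate-wise homotopy
\[
L_{ij}((b_1, b_2), t) = (H_i(b_1, t), K_j(b_2, t))
\]
shows at once that $f_1 \times f_2$ is strongly relatively inessential there with respect to the pair $(B_1 \times B_2, C_1 \times C_2)$: the two endpoint conditions of Definition \ref{def_rel_inessential} hold factor-by-factor, and for $(c_1, c_2) \in (U_i \cap C_1) \times (V_j \cap C_2)$ one has $L_{ij}((c_1, c_2), t) = (f_1(c_1), f_2(c_2)) \in A_1 \times A_2$ independently of $t$.

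To compress the $(k+1)(\ell+1)$ sets $\{U_i \times V_j\}$ into $k + \ell + 1$ strongly relatively inessential sets, I would use the filtrations $P_i = U_0 \cup \cdots \cup U_i$ and $Q_j = V_0 \cup \cdots \cup V_j$ and form the sets
\[
R_m = \bigcup_{s=1}^{m} P_s \times Q_{m+1-s}
\]
exactly as in the proof of Proposition \ref{prop_prod_relcat} (with a suitably modified $R_0$). Normality of $B_1 \times B_2$ then permits, via a shrinking lemma, the construction of open refinements within each $R_m$ separating the various index pairs $(i,j)$ contributing to $R_m$, so that the local homotopies $L_{ij}$ can be combined into a single homotopy on $R_m$ realising the endpoint conditions of Definition \ref{def_rel_inessential}.

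The main obstacle, and the only place where the strong condition does real work, is verifying that this combined homotopy is stationary on $R_m \cap (C_1 \times C_2)$. The decisive observation is that whenever $(c_1, c_2) \in C_1 \times C_2$ lies in overlapping patches $U_i \times V_j$ and $U_{i'} \times V_{j'}$, the two local homotopies $L_{ij}$ and $L_{i'j'}$ take the \emph{same} value $(f_1(c_1), f_2(c_2))$ for every $t \in I$, since each of the factor homotopies $H_i, H_{i'}, K_j, K_{j'}$ is itself stationary on $C_1$ or $C_2$. Hence any blending (partition-of-unity or nested-shrinking) used to combine the $L_{ij}$ produces a homotopy whose restriction to $C_1 \times C_2$ is the constant $f_1 \times f_2$ taking values in $A_1 \times A_2$. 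This delivers strong relative inessentialness on each $R_m$, yielding $\srelcat(f_1 \times f_2) \leq k + \ell = \srelcat(f_1) + \srelcat(f_2)$.
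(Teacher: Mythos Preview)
Your proposal is correct and matches the paper's approach: the paper's own proof is the single sentence ``The proof is similar to the proof of Proposition \ref{prop_prod_relcat},'' and your sketch carries out exactly that adaptation, supplying in addition the key verification---that the factorwise stationarity of $H_i$ on $C_1$ and $K_j$ on $C_2$ forces every blended homotopy to be stationary on $C_1\times C_2$---which the paper leaves implicit. One small remark: in the $\srelcat$ setting there is no distinguished ``$V_0$'' as in Definition \ref{d:relcat_map}, so no special treatment of $R_0$ is needed; all $R_m$ are handled uniformly.
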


\begin{proof}
The proof is similar to the proof of Proposition \ref{prop_prod_relcat}.    
\end{proof}
In particular $\srelcat(B \times X, C \times A) \leq \srelcat(B, C) + \srelcat(X, A).$

\begin{example}\label{ex:comp}
Let $D^n$ be the unit closed $n$-ball in $\mathbb{R}^n$ and $\partial{D^n}$ its boundary. Consider the identity map $\id \colon (D^n, \partial{D^n}) \to (D^n, \partial{D^n})$. We prove that $\relcat(\id)=1=\srelcat(\id)$ and $\qscat(\id) =0$.

\m First, note that $\cat(D^n, \partial{D^n}) \geq 1$, since the open subset $V_0=D^n$ of $D^n$ does not satisfy the homotopy condition in Definition \ref{d:clot}. Now, let $V_0= D^n - \{0\}$ be the punctured $n$-ball and $V_1 = D^n - \partial{D^n}$. Then $\{V_0, V_1\}$ is an open covering of $D^n$. Observe that this covering satisfy the conditions in Definition \ref{d:clot}. Therefore, $\cat(D^n, \partial{D^n})=1$.

Observe that there is a homotopy $H \colon D^n \times [0, 1] \to D^n$ satisfying the condition in \defref{d:A-cat_map}. Thus $\qscat(\id) =0$. 

Also, observe that $D^n$ cannot be strongly relative categorical for the pair $(D^n, \partial{D^n})$. However, $U_0 = D^n - (1, 0, \ldots, 0) $ and $U_1 = D^n - (-1, 0, \ldots, 0)$ are  strongly relative categorical for the pair $(D^n, \partial{D^n})$. Thus   
$\srelcat(\id)=1$.  
\qed
\end{example}

Example \ref{ex:comp} shows that the inequality in Proposition \ref{prop:ineqality1} can be strict. Also, note that Example \ref{ex:comp} implies that \defref{d:relcat_map} and \defref{d:A-cat_map} as well as  \defref{d:A-cat_map} and \defref{d:srelcat_map} are not equivalent.

\begin{example}\label{ex:srelcat_S1}
Let $X= S^{1} \times S^{1}$, and let $A=\{(x,x) \in X\}$
be the diagonal. We prove that $\srelcat(X, A)=1$.
Since $A$ is not a deformation retract of $X$, we have $1 \leq \srelcat(X, A)$.
We are done if we show that $\srelcat(X, A) \leq 1$. Take $z \in S^1, z\neq 1$. Let $L= (1, z)A \subset X$. Then the circle $L$ is a coset of $A $. One may consider $X$ as the image of exponential map on $\RR^2$.  Then $L$ can be taken as the image of the line parallel to the diagonal and passing through $(0, c)\in \RR^2$ for some $c\in (0,1)$. 
Furthermore,  $X-L$ is homeomorphic to an annulus containing the image of $A$ as an angular circle. So the inclusion $A\to X-L$ is a homotopy equivalence, and hence there is a strong deformation retraction $X-L\to A$. Hence $\srelcat(X, A) \leq 1$, and thus $\srelcat(X, A) = 1$.  
\qed
\end{example}

\m One can ask the following question generalizing the Ganea conjecture \cite{Gan2}.

\begin{question}\label{gen_Ganea_conj}
Let $\pt$ be a point in $S^n$.
\begin{enumerate}[\rm (i)]
\item Which pairs of spaces $(X, Y)$ satisfy $$\cat(X \times S^n, Y \times \pt) = \cat(X, Y) + 1?$$
\item Which pairs of spaces $(X, Y)$ satisfy $$\srelcat(X \times S^n, Y \times \pt) = \srelcat(X, Y) + 1?$$
\end{enumerate}
\end{question}
We note that when $Y$ is a point in $X$, then Question \ref{gen_Ganea_conj} is known as the Ganea conjecture, and  counterexamples to this conjecture was first given by Iwase \cite{Iwa}.

\m In the rest of this section, we give a cohomological lower bound for $\srelcat(f)$.  The theorem below is a version of the cup-length estimate, see~\cite[Prop. 1.5]{CLOT}, we give a proof for completeness. 

Given a commutative ring $R$, the {\it nilpotency index} of $R$ is the non-negative integer $n$ such that $R^n \neq 0$ but $R^{n+1} =0$; it is denoted by $\mbox{nil} (R)$.  Let $f \colon (B, C) \to (X,A)$ be a map of pairs, and let 
\[
f^* \colon H^*(X, A) \to H^*(B, C)
\]
 be the induced homomorphism. 

\begin{thm}\label{thm:cohom_low_bnd}
${\rm nil} ({\rm Im }(f^*)) \leq \srelcat(f)$.
\end{thm}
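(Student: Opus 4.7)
The plan is to imitate the classical cup-length lower bound for LS category (\cite[Prop.~1.5]{CLOT}). Suppose $\srelcat(f) = n$ (the case $n = \infty$ being vacuous) and fix an open cover $B_0, B_1, \ldots, B_n$ of $B$ such that each restriction $f|_{(B_j, B_j \cap C)} \colon (B_j, B_j \cap C) \to (X, A)$ is strongly relatively inessential. By \defref{def_rel_inessential} this yields a homotopy of pairs from $f|_{B_j}$ to a map into $A$, constant on $B_j \cap C$. Hence $f|_{(B_j, B_j \cap C)}$ is relatively homotopic to a map $(B_j, B_j \cap C) \to (A, A)$, and since $H^*(A, A) = 0$, the induced homomorphism
\[
(f|_{(B_j, B_j \cap C)})^* \colon H^*(X, A) \to H^*(B_j, B_j \cap C)
\]
is identically zero.

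Next, given classes $v_0, v_1, \ldots, v_n \in H^*(X, A)$, set $u_j := f^*(v_j) \in H^*(B, C)$. The previous step gives that $u_j$ restricts to zero in $H^*(B_j, B_j \cap C)$. Excision for CW pairs provides an isomorphism $H^*(B_j \cup C,\, C) \cong H^*(B_j,\, B_j \cap C)$, so $u_j$ also dies in $H^*(B_j \cup C, C)$. The long exact cohomology sequence of the triple $(B,\, B_j \cup C,\, C)$ then produces a lift $\bar u_j \in H^*(B,\, B_j \cup C)$ whose image in $H^*(B, C)$ equals $u_j$.

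The relative cup product of these lifts lands in
\[
H^*\Bigl(B,\, \bigcup_{j=0}^{n} (B_j \cup C)\Bigr) \;=\; H^*(B, B) \;=\; 0,
\]
because the $B_j$ cover $B$. Naturality of the cup product with respect to the restriction to $(B, C)$ sends this class to $u_0 \cup u_1 \cup \cdots \cup u_n$, which must therefore vanish in $H^*(B, C)$. Since $f^*$ is a ring homomorphism for the relative cup product, ${\rm Im}(f^*)$ is a (non-unital) subring of $H^*(B, C)$ in which every $(n+1)$-fold product is zero, so $\mbox{nil}({\rm Im}(f^*)) \leq n = \srelcat(f)$.

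The step requiring the most care is the multiplicativity: each $\bar u_j$ is a class modulo a \emph{different} subspace $B_j \cup C$, and one must verify that the iterated relative cup product is defined and lands in cohomology modulo $\bigcup_j (B_j \cup C) = B$. This is standard for CW pairs, and the rest of the argument is essentially bookkeeping with excision and the long exact sequence.
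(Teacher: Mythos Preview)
Your proof is correct and follows essentially the same cup-length strategy as the paper: show each restriction $f|_{(B_j,B_j\cap C)}$ induces zero on relative cohomology, lift the classes $f^*(v_j)$ accordingly, and observe that the relative cup product of the lifts lands in $H^*(B,B)=0$. The only difference is bookkeeping: the paper lifts to $H^*(B,B_j)$ via the long exact sequence of the triple $(B,B_j,B_j\cap C)$, whereas you lift to $H^*(B,B_j\cup C)$ by first invoking excision $H^*(B_j\cup C,C)\cong H^*(B_j,B_j\cap C)$ and then the triple $(B,B_j\cup C,C)$. Your route is slightly more delicate at that step, since $B_j$ is merely open, not a subcomplex, so this is not literal ``CW excision'' --- it holds because $(B,C)$ is a cofibration (NDR) pair, but you should say so rather than appeal to CW structure. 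The paper's choice of triple sidesteps this issue entirely while arriving at the same conclusion.
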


\begin{proof}
Let $\srelcat(f) =k$ and  $g_i \colon (B_i, B_i \cap C) \to (X, A), i=0,\ldots, k$ be a restriction of $f$ that is relatively inessential, where $B_0\cup \cdots \cup B_k=B$. For the triple $(B, B_i, B_i \cap C)$, note the inclusions $q_i: B_i\cup C\to B_i$ and $\iota_i: B_i\to B$. We have the following long exact sequence
\[
\cdots \to H^*(B, B_i) \xrightarrow{q_i^*} H^*(B, B_i \cap C) \xrightarrow{\iota_i^*} H^*(B_i, B_i \cap C)  \to \cdots .
\]
 Also we have the following commutative diagrams induced from natural inclusions and the restrictions $f_i: (B, B_i\cap C)\to (X,A)$ of $f$.
\[
\begin{tikzcd}[column sep=1.5em]
 & H^*(X, A) \arrow{dr}{g_i^*} \arrow[swap]{dl}{\bar{f}^*_i} \\
H^*(B, B_i\cap C) \arrow{rr}{\iota_i^*} &&   H^*(B_i, B_i \cap C)
\end{tikzcd}
\]
and
\[
\begin{tikzcd}[column sep=1.5em]
 & H^*(X, A) \arrow{dr}{\bar{f}_i^*} \arrow[swap]{dl}{f^*} \\
H^*(B, C) \arrow{rr}{\bar{\iota}_i^*} &&   H^*(B, B_i \cap C).
\end{tikzcd}
\]
 
Suppose $\beta_0, \beta_1, \ldots, \beta_k$ belong to ${\rm Im}(f^*)$. Then $\beta_i = f^*(\alpha_i)$ for some $\alpha_i \in H^*(X, A)$ and $i \in \{0, \ldots, k\}$. Since $g_i$ is relatively inessential, $g_i^*=0$. So $\iota^*_i(\bar{f}^*_i(\alpha_i))=0 \in H^*(B_i, B_i \cap C) $.  Thus $\bar{f}^*_i(\alpha_i)= q_i^*(\gamma_i)$ for some $\gamma_i \in H^*(B, B_i)$ and $i \in \{0, \ldots, k\}$. By taking relative cup product and using $B = \cup B_i$, we get
\[
\gamma_0 \smile \gamma_1 \smile \cdots \smile \gamma_k \in H^*(B, B) =0,
\]
and $\bar{f}^*_0(\alpha_0) \smile \bar{f}^*_1(\alpha_1) \smile \ldots \smile \bar{f}^*_k(\alpha_k) \in H^*(B, C)$. Then using Property 8 in \cite[Chapter 5, Section 6]{Spa} we get
\begin{align*}
\beta_0 \smile \beta_1 \smile \cdots \smile \beta_k & =  f^*(\alpha_0) \smile f^*(\alpha_1) \smile \cdots \cup f^*(\alpha_k) \\
& =  \bar{f}_0^*(\alpha_0) \smile \bar{f}_1^*(\alpha_1) \smile \cdots \smile \bar{f}_k^*(\alpha_k) \\
& =  q_0^*(\gamma_0) \smile q_1^*(\gamma_1) \smile \cdots \smile q_k^*(\gamma_k)\\
& = q^*(\gamma_0 \smile \gamma_1 \smile \cdots \smile \gamma_k) \\
&= 0.\nonumber
\end{align*}
 This proves the conclusion.
\end{proof}


\section{Topological complexity of a map: several versions}\label{sec_top_comp_map}

In a different context the ``topological complexity of a map'' has been studied in  the works of \cite{Pav}, \cite{ RaDe}, and \cite{MuWu}.  In this section, we introduce the concept of the ``(higher) topological complexity'' of a map in a different way and show that it is a homotopy invariant. Then we compare it with the previous ones and study some properties of this new invariant.

\m   Consider the free path  fibration
\begin{equation}\label{pi}
\pi  \colon X^I \to X \times X, \quad \pi(\ga)=(\ga(0), \ga(1)), \, \ga \colon I\to X.
\end{equation}
 Farber~\cite{Far} defined the topological complexity $\TC(X)$ of a space $X$ as the sectional category of $\pi$, and showed a nice application of this notion to robot motion planning \cite{Far2}. Later Rudyak \cite{Rud10}, see also \cite{BGRT} introduced  the “higher analogues” of topological complexity of a space (also related to robotics, by the way). Let us recall the definition.
 
\m Given a CW complex $X$, consider the fibration
\begin{equation}\label{pin}
\begin{aligned}
\pi_n: X^{I}&\to X^n, n\geq 2\\
\pi_n(\ga)=&\left(\ga(0), \ga\left(\frac{1}{n-1}\right), \ldots, \ga\left(\frac{n-2}{n-1}\right), \ga(1)\right)
\end{aligned}
\end{equation}
where $\ga\in X^I$.
 
 \begin{df}\label{def_higher_tcn}
A {\it higher}, or {\it sequential  topological complexity} of
order $n$ of a space $X$ (denoted by $\TC_n(X)$) is the sectional category of
$\pi _n$. That is, $\TC_n(X) = \secat(\pi_n)$.
\end{df}
Note that $\TC_2$ coincides with the invariant $\TC$ introduced by Farber.

\m Let $\Delta_n \colon X\to X^n, \, \Delta_n(x)=\{(x,\cdots, x) ~\vert ~ x \in X\}$ be the diagonal map for $n \geq 2$. Consider the subspace (diagonal) $\Delta_n(X)$ of the product $X^n$.

\m Recall that the LS category of a map $f$ has two important properties: $\cat(f)$ is an invariant of the homotopy class of $f$, and $\cat(\id_X)=\cat(X)$.

Since $\cat$ is a close relative of $\TC$, it seems reasonable and useful to introduce the topological complexity of a map $f$ having, in particular, properties that are similar to those mentioned above.

\begin{definition}\label{def:TC_map}
Let $X$ and $Y$ be path connected spaces, and $f \colon X \to Y$  a map. Let $f^n := f \times \cdots \times f:X^n\to Y^n$ and
\begin{equation}\label{eq:map_tc}
\ov{f^n} \colon (X^n, \Delta_n (X)) \to (Y^n, \Delta_n(Y))
\end{equation}
be the induced maps of pairs.
 We define the $n$-th topological complexity of $f$, denoted by $\TC_n(f)$,
 as $\TC_n(f) := \qscat(\ov{f^n})$.
\end{definition}  

\m Proposition \ref{prop_inv_qscatf} (i) gives the following.
\begin{prop}
The number $\TC_n(f)$ is a homotopy invariant for $n \geq 2$.
\end{prop}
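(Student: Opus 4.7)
The plan is to reduce the homotopy invariance of $\TC_n(f)$ to the homotopy invariance of $\qscat$ (Proposition~\ref{prop_inv_qscatf}(i)) by showing that a homotopy $f \simeq g$ between maps $X \to Y$ lifts to a homotopy of \emph{pairs} between $\ov{f^n}$ and $\ov{g^n}$. The only thing to verify is that the obvious product homotopy respects the diagonal subspaces.

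More concretely, suppose $f, g \colon X \to Y$ are homotopic via $H \colon X \times I \to Y$ with $H(-,0)=f$ and $H(-,1)=g$. First I would form the $n$-fold product homotopy
\[
H^n \colon X^n \times I \to Y^n, \qquad H^n(x_1,\ldots,x_n,t) = \bigl(H(x_1,t),\ldots,H(x_n,t)\bigr),
\]
which is continuous and gives a homotopy $f^n \simeq g^n$. Next I would check that $H^n$ sends the diagonal into the diagonal at every time $t$: for any $x \in X$,
\[
H^n(x,\ldots,x,t) = \bigl(H(x,t),\ldots,H(x,t)\bigr) \in \Delta_n(Y),
\]
so $H^n\bigl(\Delta_n(X) \times I\bigr) \subseteq \Delta_n(Y)$. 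Consequently $H^n$ descends to a homotopy of pairs
\[
H^n \colon \bigl(X^n, \Delta_n(X)\bigr) \times I \to \bigl(Y^n, \Delta_n(Y)\bigr),
\]
witnessing that $\ov{f^n}$ and $\ov{g^n}$ are homotopic as maps of pairs.

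Finally I would invoke Proposition~\ref{prop_inv_qscatf}(i), which asserts that $\qscat$ is a homotopy invariant, to conclude
\[
\TC_n(f) = \qscat(\ov{f^n}) = \qscat(\ov{g^n}) = \TC_n(g).
\]

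There is essentially no serious obstacle here: the argument is a direct application of Proposition~\ref{prop_inv_qscatf}(i) once one observes the product homotopy preserves the diagonal pair structure. If anything, the only subtle point is keeping the underlying pair category in view — one needs to confirm that the relevant notion of homotopy in Definition~\ref{d:A-cat_map} is the one for maps of pairs, so that sending diagonal to diagonal at every time $t$ is precisely the hypothesis needed to apply the invariance statement.
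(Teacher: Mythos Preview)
Your argument is correct and follows exactly the paper's approach: the paper simply cites Proposition~\ref{prop_inv_qscatf}(i), leaving implicit the straightforward verification (which you spell out) that a homotopy $f\simeq g$ induces a homotopy of pairs $\ov{f^n}\simeq\ov{g^n}$.
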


 Let $i_n \colon X^n \to X^{n+1}$ be
defined by $(x_1, \ldots, x_n) \mapsto (x_1, \ldots, x_n, x_{n})$. Then it is an embedding and $i_n(\Delta_n(X)) = \Delta_{n+1}(X)$.  
Let  $\pr_n \colon Y^{n+1} \to Y^{n} $ be  the projection on the first $n$ factors. 
\begin{prop}\label{prop:inequality_3} 
We have
\begin{itemize}
\item[(i)] $\relcat (\ov{f^n}) = \relcat(\pr_n \circ \ov{f^{n+1}} \circ i_n) \leq \relcat (\ov{f^{n+1}})$.
\item[(ii)]$\qscat (\ov{f^n}) = \qscat(\pr_n \circ \ov{f^{n+1}} \circ i_n) \leq \qscat (\ov{f^{n+1}})$.
\item[(iii)] $\srelcat (\ov{f^n}) = \srelcat(\pr_n \circ \ov{f^{n+1}} \circ i_n) \leq \srelcat (\ov{f^{n+1}})$.
\end{itemize}
\end{prop}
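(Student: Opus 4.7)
The plan is to reduce the whole proposition to the identity of maps of pairs
\[
\pr_n \circ \ov{f^{n+1}} \circ i_n \;=\; \ov{f^n},
\]
from which the equalities in (i)--(iii) fall out immediately, and then to derive each inequality from the composition estimates already established for $\relcat$, $\qscat$ and $\srelcat$ elsewhere in the paper.

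First I would verify that each of the three arrows in the composite is legitimate as a map of pairs. The embedding $i_n$ carries $(x,\ldots,x)\in \Delta_n(X)$ to $(x,\ldots,x,x)\in \Delta_{n+1}(X)$, so it is a map $(X^n,\Delta_n(X)) \to (X^{n+1},\Delta_{n+1}(X))$. The middle arrow $\ov{f^{n+1}}$ is a map of pairs by definition. Finally, $\pr_n$ drops the last coordinate, which sends $\Delta_{n+1}(Y)$ into $\Delta_n(Y)$, yielding a map $(Y^{n+1},\Delta_{n+1}(Y)) \to (Y^n,\Delta_n(Y))$. Evaluated on an arbitrary point, $(x_1,\ldots,x_n)$ is sent in succession to $(x_1,\ldots,x_n,x_n)$, then to $(f(x_1),\ldots,f(x_n),f(x_n))$, and finally to $(f(x_1),\ldots,f(x_n))$, which is exactly the value of $\ov{f^n}$. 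Hence the two maps of pairs coincide on the nose, and the three equalities in (i), (ii), (iii) follow at once.

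For the three inequalities, the plan is to apply the relevant composition estimate twice. For (i), using Proposition~\ref{prop_hom_inv_relcatf}(ii) first with $g=\pr_n$ and $f=\ov{f^{n+1}}\circ i_n$, and then with $g=\ov{f^{n+1}}$ and $f=i_n$, one obtains
\[
\relcat\bigl(\pr_n \circ \ov{f^{n+1}} \circ i_n\bigr) \;\leq\; \relcat\bigl(\ov{f^{n+1}} \circ i_n\bigr) \;\leq\; \relcat(\ov{f^{n+1}}).
\]
Item (ii) is proved verbatim, with Proposition~\ref{prop_inv_qscatf}(ii) replacing Proposition~\ref{prop_hom_inv_relcatf}(ii), and item (iii) uses Proposition~\ref{prop_inequality_1}(ii).

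I do not expect any serious obstacle: once the composite identification is in place, the argument is pure bookkeeping. The only detail that must not be skipped is the verification that $i_n$ and $\pr_n$ respect the diagonal subspaces, since without it the composite would not be legitimate in the category of pairs and the composition inequalities for $\relcat$, $\qscat$ and $\srelcat$ would not apply.
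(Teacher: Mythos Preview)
Your proposal is correct and matches the paper's own proof: the paper records exactly the commutative square
\[
\begin{tikzcd}[column sep=1.5em]
(X^{n+1}, \Delta_{n+1}(X))  \arrow{r}{\ov{f^{n+1}}}  & (Y^{n+1}, \Delta_{n+1}(Y)) \arrow{d}{\pr_n}  \\
(X^n, \Delta_n(X)) \arrow{u}{i_n} \arrow{r}{\ov{f^n}} & (Y^n, \Delta_n(Y))
\end{tikzcd}
\]
and then invokes Propositions~\ref{prop_hom_inv_relcatf}(ii), \ref{prop_inv_qscatf}(ii) and \ref{prop_inequality_1}(ii), which is precisely your identification $\pr_n\circ\ov{f^{n+1}}\circ i_n=\ov{f^n}$ followed by the composition estimates. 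Your extra care in checking that $i_n$ and $\pr_n$ respect the diagonals is a good habit, but there is no substantive difference in approach.
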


\begin{proof}
This follows from Proposition \ref{prop_hom_inv_relcatf} (ii), \ref{prop_inv_qscatf} (ii) and  \ref{prop_inequality_1} (ii) together with the commutativity of the following diagram.
\[
\begin{tikzcd}[column sep=1.5em]
(X^{n+1}, \Delta_{n+1}(X))  \arrow{r}{\ov{f^{n+1}}}  & (Y^{n+1}, \Delta_{n+1}(Y)) \arrow{d}{pr_n}  \\
(X^n, \Delta_n(X)) \arrow{u}{i_n} \arrow{r}{\ov{f^n}} & (Y^n, \Delta_n(Y)).  
\end{tikzcd}
\]
\end{proof}

\forget
\m From Definition \ref{d:relcat_map}, \ref{d:A-cat_map} and \ref{d:srelcat_map}, Proposition \ref{prop:ineqality1} and the note after Definition \ref{d:srelcat}, we get the following.

\begin{prop}\label{prop_ineqality_tcs}
Let $X, Y$ be CW complexes and $f \colon X \to Y$ a map. Then, for $2 \leq n \in \ZZ$, $\TC_n(f) \leq \min \{\rTC_n(f),\sTC_n(f)\}$. In particular $\TC_n(X) \leq \min \{\rTC_n(X),\sTC_n(X)$.
\end{prop}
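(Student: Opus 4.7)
The plan is to split the minimum into two separate bounds, $\TC_n(f) \le \rTC_n(f)$ and $\TC_n(f) \le \sTC_n(f)$, and reduce each to a direct comparison of the relative LS-categories of the induced map of pairs $\ov{f^n} \colon (X^n, \Delta_n(X)) \to (Y^n, \Delta_n(Y))$. In view of the formula $\TC_n(f) := \qscat(\ov{f^n})$ of Definition \ref{def:TC_map}, the natural parallel definitions to adopt are $\rTC_n(f) := \relcat(\ov{f^n})$ and $\sTC_n(f) := \srelcat(\ov{f^n})$, so the entire statement reduces to establishing the general inequalities $\qscat(g) \le \relcat(g)$ and $\qscat(g) \le \srelcat(g)$ for $g = \ov{f^n}$.

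For $\qscat(g) \le \srelcat(g)$, this is the general comparison recorded just after Definition \ref{d:srelcat_map}: a strongly relatively inessential restriction supplies, via Definition \ref{def_rel_inessential}, precisely a homotopy of the kind required by Definition \ref{d:A-cat_map}, so every $\srelcat$-cover is already a $\qscat$-cover.

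For $\qscat(g) \le \relcat(g)$, the plan is to take a $\relcat$-cover $V_0, V_1, \ldots, V_k$ of $X^n$ with $\Delta_n(X) \subseteq V_0$ and recycle it as a $\qscat$-cover. The set $V_0$ comes already equipped (by Definition \ref{d:relcat_map}) with a homotopy of pairs whose underlying homotopy $V_0 \times I \to Y^n$ is exactly the $\qscat$-deformation into $\Delta_n(Y)$ demanded by Definition \ref{d:A-cat_map}. For each $i \ge 1$, the restriction $g|_{V_i}$ is null-homotopic, and since $Y$ is path-connected the diagonal $\Delta_n(Y)$ is non-empty, so one may choose the constant value of the null-homotopy to lie in $\Delta_n(Y)$. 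This promotes the null-homotopy into a deformation of $V_i$ into $\Delta_n(Y)$, meeting the $\qscat$ requirement on $V_i$.

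Combining the two bounds yields $\TC_n(f) = \qscat(\ov{f^n}) \le \min\{\relcat(\ov{f^n}), \srelcat(\ov{f^n})\} = \min\{\rTC_n(f), \sTC_n(f)\}$, and the ``in particular'' clause for spaces is recovered by specializing to $f = \id_X$. I do not foresee a serious obstacle; the only mildly delicate point is ensuring that the null-homotopies for $i \ge 1$ in the $\relcat$-case can be aimed inside the target subspace, which is automatic from path-connectedness of $Y$.
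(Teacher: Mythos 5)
Your argument is correct and is essentially the approach the paper intends: the paper leaves this proposition as a definitional remark (asserting it follows from Definitions \ref{d:relcat_map}, \ref{d:A-cat_map}, \ref{d:srelcat_map} and the observation $\qscat \leq \srelcat$ recorded after Definition \ref{d:srelcat_map}), with $\rTC_n(f)=\relcat(\ov{f^n})$ and $\sTC_n(f)=\srelcat(\ov{f^n})$ exactly as you assumed. The one detail the paper glosses over---redirecting the null-homotopies on the sets $V_i$, $i\geq 1$, into $\Delta_n(Y)$ using path-connectedness of $Y^n$ and non-emptiness of the diagonal---you handle correctly.
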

\forgotten

\m Previously the following three version of the topological complexity of a map had been defined: the first is that given by Pave\v{s}i\'{c}, \cite{Pav}, denoted by $\TCP$, the second one is that given by Rami and Derfoufi, \cite{RaDe}, denoted by $\TCRD$, and the third is that given by Murillo and Wu \cite{MuWu}, denoted by $\TCMW$. (It  is worth noting that Pave\v{s}i\'{c} \cite{Pav} uses the non-normalized $\TC$, and so his $\TC$ is one greater then $\TCP$.) Here we have the equalities
\[
\TCP(\id_X)=\TC(X)=\TC_2(\id_X)=\TCRD(\id_X)= \TCMW(\id_X).
\]
 But neither $\TCP$ nor $\TCRD$ are homotopy invariant, see Examples \ref{example_not_homotopy_pav} and \ref{example_not_homotopy_rade}. Next we compare them with our definition of $\TC_2(f)$.

\m First we recall the definition of the topological complexity $\TCP(f)$ of a map $f$. Let $q \colon E \to B$ be a surjective map. The sectional number, denoted by $\mbox{sec}(q)$, of $q$ is the smallest length $n$ of the filtrations of open subsets $$\emptyset = V_0 \subset V_1 \subset \cdots \subset V_n=B$$ such that there is a section of $q^{-1}(V_i - V_{i-1}) \to V_i - V_{i-1}$ for $i=1, \ldots, n$. If there is no such integer then $\mbox{sec}(p)=\infty$. We note that sectional number and sectional category of $q$ are equal if $q$ is a fibration. Let $X, Y$ be path connected spaces and  $f \colon X \to Y$ a surjective map.  Consider the fibration
$
\pi  \colon X^I \to X \times X
$
as in \eqref{pi}.
It induces a continuous map $\pi_f \colon X^I \to X \times Y$ by $\pi_f = (\id \times f ) \circ \pi$. Now, the topological complexity $\TCP(f)$ of $f$ is defined as  the sectional number of $\pi_f$, that is
\begin{equation}
\label{def_tc_pav}
\TCP(f) := \mbox{sec}(\pi_f).
\end{equation}

We note that \cite[Corollary 3.9]{Pav} says that $\TCP$ is a fiber homotopy equivalence invariant  of a map over the base.

\begin{example}\label{example_not_homotopy_pav}
Let $f, g \colon [0, 3] \to [0,2]$ be two continuous functions defined by the following.
\begin{align*}
f (x) =
     \begin{cases}
       x \quad&\text{if}~ 0 \leq x \leq 1 \\
       1  \quad&\text{if}~1 \leq x \leq 2 \\
       x-1  \quad&\text{if}~ 2 \leq x \leq 3\\
     \end{cases}
\end{align*}
and
\begin{align*}
g (x) = \frac{2x}{3}  \quad\quad \text{if}~ 0 \leq x \leq 3.
     \end{align*}
     
\m Hence $f$ has no continuous section, and $g$ has a section. So, the first doted  remark in \cite[Page 111]{Pav} gives   $\TCP(f) > 1$, and by \cite[Proposition 3.3]{Pav} $\TCP(g)=1$. But $f$ and $g$ are homotopic by the linear combination $t f +(1-t)g, t \in I$.
Therefore $\TCP$ is not a homotopy invariant of a map. This was mentioned in \cite{Pav}, but an explicit example was not given.
\end{example}

\m Now we recall the topological complexity $\TCRD(g)$ of a map  $g$. Let $Z$ be a path connected space and $g \colon Z \to W$ be a map. Then the space $Z \times_{W} Z :  = (g \times g)^{-1}(\Delta_2 W)$ is a subset of $Z \times Z$. Let
\[
\pi^g \colon \pi^{-1}(Z \times_{W} Z) \to Z \times_W Z
\]
be the pullback induced from the fibration $\pi \colon Z^I \to Z\ts Z$  by the inclusion

\[
Z \times_{W} Z \to Z\ts Z.
\]

Then $\TCRD(g)$ is the sectional category of $\pi^g$, that is

\begin{equation}\label{def_tc_rade}
\TCRD(g) := \sec(\pi^g).
\end{equation}
 
\m It turns out that $\TCRD$ is also a fiber homotopy equivalence invariant \cite[Corollary 7]{RaDe}. We also  note that this definition is a particular case of ``relative topological complexity" studied in \cite{Far2}.

\begin{example}\label{example_not_homotopy_rade}
In this example we show that $\TCRD$, topological complexity of a map defined in \cite{RaDe}, is not a homotopy invariant of a map. Let $X$ be a path connected topological space and $CX$ the cone on $X$ with apex $a$. Write $CX=X \ts I/X\ts\{1\}$. Define $\iota, \mathfrak{c} \colon X \to CX$ where $\iota$ is the inclusion $\iota(x)=(x,0)$ and  $\mathfrak{c}(X)=a$. Then  $\TCRD(\mathfrak{c}) = \TC(X)$ (Farber's topological complexity).

For the map $\iota$, we have $X \times_{CX} X = \Delta_2(X)$. Then $\pi^{-1}(\Delta _2(X))$ is the free loop space on $X$. So constant maps induce a section of $\pi^{\iota}$. Therefore $\TCRD(\iota) =1 \neq \TC(X)=\TCRD (\mathfrak c)$ in general.  It remains to note that $\iota$ and $\mathfrak{c}$ are homotopic.
\end{example}

\m In contrast to both definitions $\TCP$ and $\TCRD$ of the ``topological complexity of a map", for any $n \geq 2$, the number $ \TC_n(f)$ is a homotopy invariant, a topologist's primary interest.

Now we show explicitly that $\TCP(f) \neq \TC_2(f) \neq \TCRD(f)$ for some maps $f$. Let $X$ be a contractible space and $Y$ a non-contractible space. Let $f \colon X  \to Y$ be a surjective fibration, for example $\exp \colon I \to S^1$. Then by \cite[Proposition 3.2]{Pav} ${\TCP(f)} \geq \cat(Y) +1 \geq 2$.  But our definition gives $\TC_2(f) = 0$. Therefore these two are different.

\vspace{0.1 cm}   On the other hand, if $g \colon X \to Y$ is a constant map then $\TCRD(g)$ is the topological complexity $\TC(X)$ of $X$ which is strictly greater than $\cat(X)$ in general. Note that in this case $\TC_2(g)=0$.

\m Now we recall the definition of $\TCMW(f)$ of a (work) map.  Let $f \colon X \to Y$ be a continuous map. Then $\TCMW(f)$ is the least integer $n \leq \infty$ such that there exist open subsets $U_0, \ldots, U_n$ of $X \times X$ on each of which there is a map $s_i \colon U_i \to X$ satisfying $(f\times f) \circ \Delta_2 \circ s_i \simeq (f \times f)|_{U_i}$ for $i=0, \ldots, n$. We also note that Scott  defines topological complexity $\TCS(f)$ of a map in \cite[Definition 3.1]{Sco}, and he shows that these two definitions are equivalent.

\m Let $(a,b) \in U_i \subseteq X\times X$. So $s_i(a,b) \in X$. Then $(f\times f) \circ \Delta_2 \circ s_i (a, b) \in \Delta_2(Y)$.  Yet, we cannot conclude that the restriction
\[
(f \times f)|_{U_i} \colon (U_i, U_i \cap \Delta_2(X))\to (Y, \Delta_2(Y))
\]
 is inessential in the sense of Definition \ref{def_rel_inessential}.
 However, if we assume  $(f \times f)|_{U_i}$ is inessential, then the map $f \ts f: U_i\to Y\ts Y$ is homotopic to a map $U_i \to \Delta_2(Y)$. Thus by \cite[Theorem 3.4]{Sco} we get the following.  
 \begin{equation}\label{TC_RS_MW}
 \TCMW(f) = \TCS(f) \leq  \TC_2(f).
 \end{equation}
 We do not yet know if $\TC_2(f)=\TCMW(f)$.

\m In the remaining we study some properties and upper bound of $\TC_n(f)$.
The following result generalizes some parts of \cite[Proposition 3.3]{Rud10}, Proposition 3.1 and Corollary 3.3 in \cite{BGRT}.

\begin{proposition}\label{prop_cat_tc}
Let  $f \colon X \to Y$ be a map between CW spaces. Then
\[
\TC_n(f) \leq \TC_{n+1}(f) \leq \cat(f^{n+1}) \leq (n+1) \cat(f)
\]
 for all $n\geq 2$. In particular, the growth of $\TC_n(f)$ is linear with respect to $n$.
\end{proposition}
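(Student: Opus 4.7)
The chain splits into three independent inequalities, each with a clean strategy.

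For the first inequality, $\TC_n(f) \le \TC_{n+1}(f)$, I would simply invoke Proposition \ref{prop:inequality_3}(ii). That result was designed for exactly this comparison: it says $\qscat(\ov{f^n}) \le \qscat(\ov{f^{n+1}})$, which by \defref{def:TC_map} is the desired inequality. No further work is needed.

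For the second inequality, $\TC_{n+1}(f) \le \cat(f^{n+1})$, the plan is to start with an optimal categorical cover for $f^{n+1}$ and upgrade it to a cover witnessing $\qscat(\ov{f^{n+1}})$. Suppose $\cat(f^{n+1}) = k$ and let $V_0, \ldots, V_k$ be an open cover of $X^{n+1}$ with null-homotopies $H_i \colon V_i \times I \to Y^{n+1}$ satisfying $H_i(-,1) = f^{n+1}|_{V_i}$ and $H_i(V_i,0) = \{y_i\}$ for some point $y_i \in Y^{n+1}$. The only obstruction to directly applying \defref{d:A-cat_map} is that $y_i$ need not lie in $\Delta_{n+1}(Y)$. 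Since $Y$ is path-connected, so is $Y^{n+1}$, so I can choose a path $\gamma_i \colon I \to Y^{n+1}$ from some fixed diagonal point $(y^{*},\ldots, y^{*})$ to $y_i$, and prepend it to $H_i$ to obtain a new homotopy $H'_i \colon V_i \times I \to Y^{n+1}$ with $H'_i(-,1) = f^{n+1}|_{V_i}$ and $H'_i(V_i, 0) \subseteq \Delta_{n+1}(Y)$. The same open cover $\{V_i\}$ of $X^{n+1}$ then witnesses $\qscat(\ov{f^{n+1}}) \le k$. I expect this step to be the main (albeit mild) technical obstacle, since one must be careful that null-homotopies land in the diagonal rather than an arbitrary point; however, path-connectedness of $Y$ is exactly what makes this routine.

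For the third inequality, $\cat(f^{n+1}) \le (n+1)\cat(f)$, the argument is a straightforward induction using the product formula \eqref{eq_prod} for category of maps, namely $\cat(f \times g) \le \cat(f) + \cat(g)$. The base case $n+1 = 1$ is trivial, and if $\cat(f^n) \le n \cat(f)$ then
\[
\cat(f^{n+1}) = \cat(f^n \times f) \le \cat(f^n) + \cat(f) \le n\,\cat(f) + \cat(f) = (n+1)\cat(f).
\]
Combining the three inequalities yields the stated chain. The final assertion that the growth of $\TC_n(f)$ is linear in $n$ is then immediate: $\TC_n(f) \le (n+1)\cat(f)$ is a linear upper bound, while monotonicity provides the lower trend, so $\TC_n(f) = O(n)$ as $n \to \infty$.
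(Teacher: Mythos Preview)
Your proposal is correct and follows the same route as the paper: the paper also invokes \propref{prop:inequality_3}(ii) for the first inequality, says the second ``follows from the definitions of $\TC_n(f)$ and $\cat(f^n)$'', and cites the product formula \eqref{eq_prod} for the third. Your treatment of the middle step simply unpacks what the paper leaves implicit---namely that a null-homotopy of $f^{n+1}|_{V_i}$ lands at a point, which can then be slid into $\Delta_{n+1}(Y)$ using path-connectedness of $Y$ (built into \defref{def:TC_map})---so the arguments coincide.
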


\begin{proof} The first inequality follows from Definition \ref{def:TC_map} and Proposition \ref{prop:inequality_3}(ii). The second inequality follows from the definitions of $\TC_n(f)$ and $\cat(f^n)$. The last inequality follows from \eqref{eq_prod}.
\end{proof}
 
Using the arguments in the proof of \cite[Lemma 3.5]{LuMa}, one can show the following.
\begin{proposition}\label{prop:tcnx}
If $\id \colon X \to X$ is the identity map, then $\TC_n(X) = \qscat(\ov{\id^n})$.
\end{proposition}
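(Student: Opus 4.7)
The plan is to reduce both sides of the equation to a common intermediate form and then invoke the familiar correspondence between local sections of the path fibration and local homotopies to the diagonal. Since $\id \colon X \to X$ induces the identity of the pair $(X^n, \Delta_n(X))$, the remark following Definition \ref{d:A-cat_map} immediately identifies $\qscat(\ov{\id^n})$ with the Clapp--Puppe type invariant $\qscat(X^n, \Delta_n(X))$. Hence the proposition reduces to establishing the equality $\TC_n(X) = \secat(\pi_n) = \qscat(X^n, \Delta_n(X))$, for which I model my argument on \cite[Lemma 3.5]{LuMa}.

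For the inequality $\secat(\pi_n) \leq \qscat(X^n, \Delta_n(X))$, suppose $X^n$ is covered by open sets $V_0, \ldots, V_k$ equipped with homotopies $H_i \colon V_i \ts I \to X^n$ such that $H_i(-,1)$ is the inclusion of $V_i$ and $H_i(V_i,0) \subseteq \Delta_n(X)$. Reading off a single coordinate of $H_i(-,0)$ produces a continuous map $c_i \colon V_i \to X$ with $H_i(y,0) = (c_i(y), \ldots, c_i(y))$; sending $y$ to the constant path at $c_i(y)$ yields a continuous map $s_i^0 \colon V_i \to X^I$ satisfying $\pi_n \circ s_i^0 = H_i(-,0)$. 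Applying the homotopy lifting property of the Hurewicz fibration $\pi_n$, I lift $H_i$ to a homotopy $\widetilde{H}_i \colon V_i \ts I \to X^I$ with initial value $s_i^0$; then $\widetilde{H}_i(-,1)$ is a continuous section of $\pi_n$ over $V_i$, proving the inequality.

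For the reverse inequality, given local sections $s_i \colon V_i \to X^I$ of $\pi_n$, I use the standard reparameterization trick: for each $\gamma \in X^I$ and $t \in I$, set $\gamma_t(r) := \gamma(tr)$, and define $H_i \colon V_i \ts I \to X^n$ by $H_i(y,t) := \pi_n(s_i(y)_t)$. Then $H_i(-,1) = \pi_n \circ s_i$ is the inclusion of $V_i$, while $H_i(y,0) = (s_i(y)(0), \ldots, s_i(y)(0)) \in \Delta_n(X)$, so these homotopies witness $\qscat(X^n, \Delta_n(X)) \leq k$.

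The only real obstacle is technical: verifying that the lift $\widetilde{H}_i$ in the first direction can be arranged to start at the prescribed $s_i^0$. This is standard, since the homotopy lifting property of $\pi_n$ applied to the inclusion $V_i \ts \{0\} \hookrightarrow V_i \ts I$ allows one to lift $H_i$ with any preassigned initial lift; the remainder is a routine manipulation of paths.
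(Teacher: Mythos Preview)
Your proposal is correct and follows precisely the approach the paper indicates: the paper itself gives no detailed proof but simply says ``Using the arguments in the proof of \cite[Lemma 3.5]{LuMa}, one can show the following,'' and you have carried out exactly that program, spelling out both directions of the section--homotopy correspondence for $\pi_n$. There is nothing to add.
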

\forget
Therefore, one can see that $\TC_n$ turns out to be functor on the category of
topological spaces.
\forgotten

\begin{proposition}\label{prop_tcxy}
Let  $f \colon X \to Y$ be a map  between connected {\rm CW}-complexes. Then $\TC_n(f) \leq \min \{\TC_n (X), \TC_n (Y) \}$.
\end{proposition}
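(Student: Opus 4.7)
The plan is to deduce this bound directly from the composition inequality for $\qscat$ (Proposition \ref{prop_inv_qscatf}(ii)) together with the reformulation of $\TC_n$ of a space provided by Proposition \ref{prop:tcnx}.

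First, I would observe the two trivial factorizations
\[
\overline{f^n} = \overline{f^n} \circ \overline{\id_X^n} = \overline{\id_Y^n} \circ \overline{f^n},
\]
where $\overline{\id_X^n} \colon (X^n, \Delta_n(X)) \to (X^n, \Delta_n(X))$ and $\overline{\id_Y^n} \colon (Y^n, \Delta_n(Y)) \to (Y^n, \Delta_n(Y))$ are the maps of pairs induced by the identities $\id_X \colon X \to X$ and $\id_Y \colon Y \to Y$. These factorizations are legitimate compositions of maps of pairs, so Proposition \ref{prop_inv_qscatf}(ii) applies and yields
\[
\qscat(\overline{f^n}) \leq \qscat(\overline{\id_X^n}) \quad \text{and} \quad \qscat(\overline{f^n}) \leq \qscat(\overline{\id_Y^n}).
\]

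Finally, Proposition \ref{prop:tcnx} identifies $\qscat(\overline{\id_X^n}) = \TC_n(X)$ and, applied to $Y$, $\qscat(\overline{\id_Y^n}) = \TC_n(Y)$. Combining with $\TC_n(f) = \qscat(\overline{f^n})$ from Definition \ref{def:TC_map} gives the claimed inequality $\TC_n(f) \leq \min\{\TC_n(X), \TC_n(Y)\}$. There is no serious obstacle here; the argument is a direct application of the fact that $\qscat$ behaves monotonically under composition, once one writes $\overline{f^n}$ as a composition involving the two identity maps of pairs.
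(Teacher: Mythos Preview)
Your proof is correct and follows exactly the paper's approach: the paper's proof simply cites Proposition~\ref{prop_inv_qscatf}(ii) and Proposition~\ref{prop:tcnx}, and you have spelled out precisely how those two ingredients combine via the factorizations $\overline{f^n} = \overline{f^n} \circ \overline{\id_X^n} = \overline{\id_Y^n} \circ \overline{f^n}$.
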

\begin{proof}
This follows from Proposition \ref{prop_inv_qscatf} (ii) and \ref{prop:tcnx}.  
\end{proof}
It is well known that $\TC_n (X) \leq \cat(X^n)$ for any path-connected space. So $$\TC_n(f) \leq \min \{n \cat(X), n \cat(Y)\}.$$

 \begin{example}
 Let $f \colon S^{1} \to S^1$ be the map defined by $z \mapsto z^p$ where $p > 0$.
 So $\deg(f)=p$, and so $f$ is essential. Thus $1 \leq \TC_2(f)$. On the other hand $\TC_2(f) \leq \srelcat((S^1)^2, \Delta_2(S^1))$. So, by Example \ref{ex:srelcat_S1}, $\TC_2(f)=1$ and it does not depend on the degree of $f$.
\end{example}
 
 \forget
\begin{example}
Let $\mathbb{RP}^3 \xrightarrow{g} \mathbb{RP}^3/\mathbb{RP}^{2} \cong S^3$ be the natural quotient map. So $g$ is essential which implies that  $1 \leq \TC_2(g) \leq \sTC_2(g)$. On the other hand, $\sTC_2(g) \leq \srelcat((S^3)^2, \Delta_2(S^3)) = 1$ by Example \ref{ex:srelcat_S3}. Thus $\TC_2(g)=1 = \sTC_2(f)$, whereas the results in \cite{MuWu} cannot determine $\TCMW(g)$. However, we have
 \[
 1 \leq \TCMW(g) \leq \TC_2(g)=1.
 \]
 \end{example}
 \forgotten

\section{Weak topological complexity of a map}\label{sec_wtop_comp_map}
 
In this section we recall the concept of weak sectional category and weak topological complexity following \cite{GaVa}. Then we introduce the concept of weak higher topological complexity of a map and study some of its properties.

Let $X$ be a space and $k \geq 1$. Recall that  $\Delta_k \colon X \to X^k$ is the diagonal map. Let $X^{[k+1]}$ be  the $(k + 1)$-fold smash-product. Then the weak category of $X$, denoted by $\wcat(X)$, is the least integer $k$ such that the composition  $X \xrightarrow{\Delta_{k+1}} X^{k+1} \xrightarrow{q} X^{[k+1]}$ is null homotopic where $q$ is the quotient map. The paper \cite{BeHi} introduced this and showed that $\wcat(X) \leq \cat(X)$. 

Let $ \xi \colon E \to X$ be a map. Then the $k$th fatwedge of $\xi$ for $0 \leq k \in \ZZ$ is the map $$\xi_k \colon T^k(\xi) \to X^{k+1}$$ which is defined inductively by the following.
\begin{enumerate}
\item $\xi_0 := \xi \colon E \to X$ and $T^0(\xi) := E$.
\item Assume that $\xi_{k-1} \colon T^{k-1}(\xi) \to X^k$ is defined. Then $\xi_k$ and $T^k(\xi)$ is constructed from the following homotopy commutative diagram.
\begin{center}
\begin{tikzcd}
\bullet \arrow[rr] \arrow[dd]  && E \times X^k \arrow[dl] \arrow[dd,"\xi \times {\id}_{B^k}"]  \\
 & T^k(\xi)\arrow[dr, dotted, "\xi_k"]  &\\
X \times T^{k-1}(\xi) \arrow[rr,"{\id}_B \times \xi_{k-1}"] \arrow[ur]  && X^{k+1}.
\end{tikzcd}
\end{center}
That is, $\xi_k$ is the join of the maps ${\id}_B \times \xi_{k-1}$ and $\xi \times {\id}_{B^k}$.
\end{enumerate}

\begin{df}\cite[Definition 1.4]{GaVa}
Let $\xi \colon E \to X$ be a map and $C_k(\xi)$ the homotopy cofiber of $\xi_k \colon T^{k}(\xi) \to X^{k+1}$ with an induced map $ \xi_{M_k} \colon X^{k+1} \to C_k(\xi)$. Then the weak sectional category of $\xi$, denoted by $\wsecat(\xi)$, is the least positive integer $k$ such that the composition $X \xrightarrow{\Delta_{k+1}} X^{k+1} \xrightarrow{\xi_{M_k}} C_k(\xi)$ is inessential.  
\end{df}

In particular, one can define the following.
\begin{df}
The $n$th weak topological complexity of $X$, denoted by $\wTC_n(X)$, is the weak sectional category of $\pi_n$ defined in \eqref{pin}. That is $\wTC_n(X) : = \wsecat(\pi_n)$ for $n \geq 2$.  
\end{df}

Note that when $n=2$ then $\wTC_2(X)$ is $\wTC(X)$ of \cite[Section 3]{GaVa}. Also $X$ is homotopy equivalent to $X^I$ via the map $x \mapsto c_x$ where $c_x \colon I \to X$ is the constant map defined by $c_x(t) =x$. That is $\pi_n$ is the fibrational substitution of the  diagonal map $\Delta_n$ for $n \geq 2$. Therefore,  $\wTC_n(X) = \wsecat(\Delta_n)$. 
Now we extend this concept to define weak topological complexity of a map.

\begin{df}\label{def_wsecat_f}
Let $f \colon X \to Y$ be a map. Then the weak topological complexity of $f$, denoted by $\wTC_n(f)$, is the weak sectional category of the composition $X \xrightarrow{\Delta_n} X^{n} \xrightarrow{f^{n}} Y^{n}$.
\end{df}
In particular, if  $\id \colon X \to X $ is the identity map then $\wTC_n({\id})= \wTC_n(X)$ for $n \geq 2$.

Let $R$ be a commutative ring with unity. Then, for $n \geq 2$, the element $u\in H^*(X^n;R)$ is called a zero-divisor class of grade $n$ if $\Delta_n^*u=0$ where $\Delta_n \colon X \to X^n$ is the diagonal map. 
 The zero-divisors-cup-length of grade $n$, denoted by $\zcl_n^R(X)$, for $X$ is the maximal $k$ such that $u_1\smile \cdots \smile u_k\neq 0$ provided each $u_i$ is a zero-divisor class of grade $n$.
\begin{proposition}
Let $f \colon X \to Y$ be a map and $g_n = f^n \circ \Delta_n$ for $n \geq 2$. Then,
 $\wTC_n(f) \leq \wcat(Y^n)$ and $\wTC_n(f) \geq \emph{nil}(\emph{ker} g_n^*)$. 

In particular,  $\wTC_n(X) \leq \wcat(X^n)$ and $\wTC_n(X) \geq \zcl_n^R(X)$. 
\end{proposition}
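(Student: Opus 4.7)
The plan is to prove the two inequalities separately and then deduce the ``in particular'' statements by specializing to $f = \id_X$, in which case $g_n = f^n \circ \Delta_n$ reduces to $\Delta_n$ and the target is $X^n$.

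For the upper bound $\wTC_n(f) \leq \wcat(Y^n)$, I would exploit functoriality of the fatwedge construction in its source variable. Choosing basepoints so that $g_n(x_0) = y_0^n$, the basepoint inclusion $\iota \colon \{y_0^n\} \hookrightarrow Y^n$ factors through $g_n$, which gives a morphism of the data defining the iterated join and hence a natural map $T^k(\iota) \to T^k(g_n)$ commuting with the respective maps $\iota_k$ and $(g_n)_k$ to $(Y^n)^{k+1}$. Since $T^k(\iota)$ is the standard fatwedge with cofiber $(Y^n)^{[k+1]}$ and quotient $q$, functoriality of cofibers produces a map $(Y^n)^{[k+1]} \to C_k(g_n)$ satisfying $(g_n)_{M_k} = \bigl[(Y^n)^{[k+1]} \to C_k(g_n)\bigr] \circ q$. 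Thus any null-homotopy of $q \circ \Delta_{k+1}$ (witnessing $\wcat(Y^n) \leq k$) post-composes to a null-homotopy of $(g_n)_{M_k} \circ \Delta_{k+1}$, giving $\wsecat(g_n) \leq k$.

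For the lower bound $\wTC_n(f) \geq \operatorname{nil}(\ker g_n^*)$, set $k = \wsecat(g_n)$ and take $u_0, \ldots, u_k \in \ker g_n^* \subseteq H^*(Y^n)$; the goal is $u_0 \smile \cdots \smile u_k = 0$. The key geometric input, proved by induction on $k$ from the homotopy pushout description of $T^k(g_n)$ as the join of $Y^n \times T^{k-1}(g_n) \xleftarrow{g_n \times \id} X \times T^{k-1}(g_n) \xrightarrow{\id \times (g_n)_{k-1}} X \times (Y^n)^k$, is that $T^k(g_n)$ is covered by $k+1$ subspaces $U_0, \ldots, U_k$ such that on $U_i$ the $i$-th coordinate map $U_i \to Y^n$ factors through $g_n$. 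It follows that $(g_n)_k^*(1 \otimes \cdots \otimes u_i \otimes \cdots \otimes 1)$ (with $u_i$ in the $i$-th slot) restricts to zero on $U_i$ and therefore lifts to $H^*(T^k(g_n), U_i)$. The standard iterated relative cup product then places $(g_n)_k^*(u_0 \otimes \cdots \otimes u_k)$ in $H^*(T^k(g_n), U_0 \cup \cdots \cup U_k) = H^*(T^k(g_n), T^k(g_n)) = 0$. The cofiber exact sequence for $T^k(g_n) \xrightarrow{(g_n)_k} (Y^n)^{k+1} \xrightarrow{(g_n)_{M_k}} C_k(g_n)$ then exhibits $u_0 \otimes \cdots \otimes u_k$ as $(g_n)_{M_k}^*(v)$ for some $v \in H^*(C_k(g_n))$; since $\wsecat(g_n) \leq k$ makes $(g_n)_{M_k} \circ \Delta_{k+1}$ null-homotopic, applying $\Delta_{k+1}^*$ yields $u_0 \smile \cdots \smile u_k = \Delta_{k+1}^*(u_0 \otimes \cdots \otimes u_k) = 0$.

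Specializing $f = \id_X$ gives $g_n = \Delta_n$ with target $X^n$, so $\wTC_n(X) \leq \wcat(X^n)$ and $\wTC_n(X) \geq \operatorname{nil}(\ker \Delta_n^*) = \zcl_n^R(X)$, the latter equality being the definition of the zero-divisor cup-length of grade $n$. The step I expect to be most delicate is establishing the covering $\{U_0, \ldots, U_k\}$ of $T^k(g_n)$: one must verify inductively that the iterated pushout really produces subspaces with the stated ``$g_n$-in-position-$i$'' property and that their union is all of $T^k(g_n)$, being careful about the pullback glueing at each join step. Once this covering is in hand, the remaining relative cup product and cofiber-sequence manipulations are routine.
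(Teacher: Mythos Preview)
Your argument is correct; you are essentially reproving, for the particular map $g_n$, the two general inequalities $\mathrm{nil}(\ker \xi^*)\leq \wsecat(\xi)\leq \wcat(B)$ valid for any map $\xi\colon E\to B$. The paper takes the shorter route of simply invoking this result (stated as \cite[Theorem~21]{GaVa}) and reading off the proposition from Definition~\ref{def_wsecat_f}; your proposal unpacks the content of that theorem rather than citing it.

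The functoriality-of-cofibers argument you give for the upper bound and the covering/relative-cup-product argument for the lower bound are exactly the standard proofs of those two halves of the Garcia-Calcines--Vandembroucq theorem, so nothing is lost. The inductive covering of $T^k(g_n)$ by the sets $U_0,\ldots,U_k$ that you flag as delicate does go through: at each stage the homotopy pushout $B\times T^{k-1}(\xi)\leftarrow E\times T^{k-1}(\xi)\rightarrow E\times B^k$ is covered by the images of the two legs, the second leg contributing the new $U_0$ (first coordinate factoring through $\xi$) and the first leg carrying the $k$ sets already built for $T^{k-1}(\xi)$; one only needs to thicken to open sets using the mapping-cylinder structure. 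The trade-off is clear: the paper's one-line citation is efficient if the reader knows~\cite{GaVa}, while your self-contained version would spare the reader that detour at the cost of reproducing a known argument.
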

\begin{proof}
This follows from \cite[Theorem 21]{GaVa} and Definition \ref{def_wsecat_f}. 
\end{proof}

{\bf Acknowledgment:} The second author thanks University of Florida for supporting his visit, `International office IIT Madras' and `Science and Engineering Research Board India' for research grants. The authors thank John Oprea, Jamie Scott, Petar Pavesic, and Alex Dranishnikov for some helpful discussion.  


\renewcommand{\refname}{References}

\end{document}